\renewcommand{\leq}{\ensuremath{\leqslant}}
\renewcommand{\geq}{\ensuremath{\geqslant}}
\newcommand{\minimize}[2]{\ensuremath{\underset{\substack{{#1}}}%
{\text{minimize}}\;\;#2 }}
\newcommand{\Frac}[2]{\displaystyle{\frac{#1}{#2}}} 
\newcommand{\scal}[2]{{\langle{{#1}\mid{#2}}\rangle}}
\newcommand{\sscal}[2]{{\big\langle{{#1}\mid{#2}}\big\rangle}}
\newcommand{\menge}[2]{\big\{{#1}~\big |~{#2}\big\}} 
\newcommand{\Menge}[2]{\left\{{#1}~\Big|~{#2}\right\}} 
\newcommand{\KKK}{\ensuremath{\boldsymbol{\mathcal K}}}
\newcommand{\HHH}{\ensuremath{\boldsymbol{\mathcal H}}}
\newcommand{\HH}{\ensuremath{{\mathcal H}}}
\newcommand{\GG}{\ensuremath{{\mathcal G}}}
\newcommand{\Sum}{\ensuremath{\displaystyle\sum}}
\newcommand{\emp}{\ensuremath{{\varnothing}}}
\newcommand{\Id}{\ensuremath{\operatorname{Id}}\,}
\newcommand{\cart}{\ensuremath{\raisebox{-0.5mm}{\mbox{\LARGE{$\times$}}}}}
\newcommand{\RR}{\ensuremath{\mathbb{R}}}
\newcommand{\RPP}{\ensuremath{\left]0,+\infty\right[}}
\newcommand{\RX}{\ensuremath{\left]-\infty,+\infty\right]}}
\newcommand{\NN}{\ensuremath{\mathbb N}}
\newcommand{\weakly}{\ensuremath{\:\rightharpoonup\:}}
\newcommand{\exi}{\ensuremath{\exists\,}}
\newcommand{\ran}{\ensuremath{\text{\rm ran}\,}}
\newcommand{\pinf}{\ensuremath{{+\infty}}}
\newcommand{\dom}{\ensuremath{\text{\rm dom}\,}}
\newcommand{\prox}{\ensuremath{\text{\rm prox}}}
\newcommand{\gra}{\ensuremath{\text{\rm gra}\,}}
\newcommand{\reli}{\ensuremath{\text{\rm ri}\,}}
\newcommand{\zeroun}{\ensuremath{\left]0,1\right[}}   
\newtheorem{example}[theorem]{Example}
\newtheorem{problem}[theorem]{Problem}
\newtheorem{remark}[theorem]{Remark}
\title{Solving Coupled Composite Monotone Inclusions by Successive 
Fej\'er Approximations of Their Kuhn-Tucker Set\thanks{Received by 
the editors XXX XX, 2013; accepted for publication XXX XX, 2014;
published electronically DATE. \URL mms/x-x/XXXX.html.}}
\author{Abdullah Alotaibi\thanks{King Abdulaziz University,
Department of Mathematics, Jeddah 21859, Saudi Arabia
(aalotaibi@kau.edu.sa)}
\and
Patrick L. Combettes,\thanks{Sorbonne Universit\'es -- 
UPMC Univ.\ Paris 06, 
UMR 7598, Laboratoire Jacques-Louis Lions,
F-75005, Paris, France (plc@ljll.math.upmc.fr)} 
\and 
Naseer Shahzad\thanks{King Abdulaziz University,
Department of Mathematics, Jeddah 21859, Saudi Arabia
(nshahzad@kau.edu.sa)}
}
\begin{document}

\maketitle
\newcommand{\slugmaster}{\slugger{siopt}{xxxx}{xx}{x}{x--x}}%

\begin{abstract} 
We propose a new class of primal-dual Fej\'er monotone algorithms for
solving systems of composite monotone inclusions. Our construction 
is inspired by a framework used by Eckstein and Svaiter for the basic
problem of finding a zero of the sum of two monotone operators. At
each iteration, points in the graph of the monotone operators
present in the model are used to construct a half-space containing
the Kuhn-Tucker set associated with the system. The primal-dual
update is then obtained via a relaxed projection of the current
iterate onto this half-space. An important feature that
distinguishes the resulting splitting algorithms from existing ones
is that they do not require prior knowledge of bounds on the 
linear operators involved or the inversion of linear operators.
\end{abstract} 

\begin{keywords}
duality,
Fej\'er monotonicity,
monotone inclusion,
monotone operator,
primal-dual algorithm,
splitting algorithm
\end{keywords}

\begin{AMS}
Primary 47H05; Secondary 65K05, 90C25, 94A08
\end{AMS}

\maketitle

\section{Introduction}

The first monotone operator splitting methods arose in the late
1970s and were motivated by applications in mechanics and partial
differential equations \cite{Glow83,Glow89,Merc80}.
In recent years, the field of monotone operator splitting 
algorithms has benefited from a new impetus, fueled by emerging
application areas such as signal and image processing, statistics,
optimal transport, machine learning, and domain decomposition 
methods \cite{Atto11,Bach12,Banf11,Gold12,Papa13,Ragu13,Wrig12}. 
Three main algorithms dominate the field explicitly or 
implicitly: the forward-backward method \cite{Merc79},
the Douglas-Rachford method \cite{Lion79}, and the
forward-backward-forward method \cite{Tsen00}. These methods were
originally designed to solve inclusions of the type $0\in Ax+Bx$, 
where $A$ and $B$ are maximally monotone operators acting on a 
Hilbert space (via product space reformulations, they can also
be extended to problems involving sums of more than 2 
operators \cite{Livre1,Spin83}). Until recently, a significant
challenge in the field was to design splitting techniques for 
inclusions involving linearly composed operators, say 
\begin{equation}
\label{eXc5t-iU8-20p}
0\in Ax+L^*BLx, 
\end{equation}
where $A$ and $B$ are maximally monotone 
operators acting on Hilbert spaces $\HH$ and $\GG$, respectively, 
and $L$ is a bounded linear operator from $\HH$ to $\GG$. In the
case when $A$ and $B$ are subdifferentials, say $A=\partial f$ 
and $B=\partial g$, where $f\colon\HH\to\RX$ and 
$g\colon\GG\to\RX$ are lower semicontinuous 
convex functions satisfying a suitable constraint qualification,
\eqref{eXc5t-iU8-20p} corresponds to the 
minimization problem 
\begin{equation}
\label{eXc5t-iU8-20a}
\minimize{x\in\HH}{f(x)+g(Lx)}.
\end{equation}
The Fenchel-Rockafellar dual of this problem is
\begin{equation}
\label{eXc5t-iU8-20b}
\minimize{v^*\in\GG}{f^*(-L^*v^*)+g^*(v^*)}
\end{equation}
and the associated Kuhn-Tucker set is 
\begin{equation}
\label{eXc5t-iU8-20c}
\boldsymbol{Z}=\menge{(x,v^*)\in\HH\oplus\GG}
{-L^*v^*\in\partial f(x)\:\;\text{and}\;Lx\in \partial g^*(v^*)}.
\end{equation}
The importance of this set is discussed extensively in 
\cite{Rock74}, notably in connection with the fact that 
Kuhn-Tucker points provide solutions to \eqref{eXc5t-iU8-20a} 
and \eqref{eXc5t-iU8-20b}.
To the best of our knowledge, the first splitting method for
composite problems of the form \eqref{eXc5t-iU8-20p} is that 
proposed in \cite{Siop11}, which was developed around the following 
formulation.

\begin{problem}
\label{prob:1}
\rm
Let $\HH$ and $\GG$ be real Hilbert spaces, and set 
$\KKK=\HH\oplus\GG$. Let 
$A\colon\HH\to 2^{\HH}$ and $B\colon\GG\to 2^{\GG}$ be maximally 
monotone operators, and let $L\colon\HH\to\GG$ be a bounded linear 
operator. Consider the inclusion problem
\begin{equation}
\label{e:primal}
\text{find}\;\;\overline{x}\in\HH\;\;\text{such that}\;\; 
0\in A\overline{x}+L^*BL\overline{x},
\end{equation}
the dual problem
\begin{equation}
\label{e:dual}
\text{find}\;\;\overline{v}^*\in\GG\;\;\text{such that}\;\; 
0\in -LA^{-1}(-L^*\overline{v}^*)+B^{-1}\overline{v}^*,
\end{equation}
and the associated Kuhn-Tucker set
\begin{equation}
\label{aqYg6521a}
\boldsymbol{Z}=\menge{(x,v^*)\in\KKK}
{-L^*v^*\in Ax\:\;\text{and}\;Lx\in B^{-1}v^*}.
\end{equation}
The problem is to find a point in $\boldsymbol{Z}$.
The sets of solutions to \eqref{e:primal} and \eqref{e:dual} are 
denoted by $\mathscr{P}$ and $\mathscr{D}$, respectively.
\end{problem}

The Kuhn-Tucker set \eqref{aqYg6521a} is a natural extension of 
\eqref{eXc5t-iU8-20c} to general monotone operators.
In \cite{Siop11}, a point in $\boldsymbol{Z}$ was obtained 
by applying the forward-backward-forward method to a suitably 
decomposed inclusion in $\HH\oplus\GG$ (the use of 
Douglas-Rachford splitting was also discussed there). Subsequently, 
the idea of using traditional splitting techniques to find 
Kuhn-Tucker points was further exploited in a variety of settings, 
e.g., \cite{Optl14,Bot13a,Bot13c,Siop13,Svva12,Opti14,Bang13}. 
Despite their broad range of applicability, existing splitting 
methods suffer from two shortcomings that precludes their use in
certain settings. Thus, a shortcoming of splitting methods 
based on the forward-backward-forward \cite{Siop11,Svva12} or the 
forward-backward algorithms \cite{Sico10,Opti14,Bang13}
is that they require knowledge of $\|L\|$; this is also true for
the Douglas-Rachford-based method of \cite{Bot13c}.
On the other hand, a shortcoming of splitting methods based on 
the Douglas-Rachford \cite[Remark~2.9]{Siop11} or Spingarn 
\cite{Optl14} algorithms is that they require the inversion of 
linear operators, as does \cite[Algorithm~3]{Bot13a}. In some
applications, however, $\|L\|$ cannot be evaluated reliably and the
inversion of linear operators is not numerically feasible.
As will be seen in Section~\ref{sec:4}, this issue becomes 
particularly acute when dealing with systems of coupled monotone 
inclusions, which constitute the main motivation for
our investigation.

Our objective is to devise a new class of algorithms for solving 
Problem~\ref{prob:1} that alleviate the above-mentioned 
shortcomings of existing methods. Our approach is inspired by an
original splitting framework proposed in \cite{Svai08} for solving
the basic inclusion (see also \cite{Svai09} for 
the extension to the sum of several operators)
\begin{equation}
\label{HJG7t6zzv02-20s}
0\in Ax+Bx. 
\end{equation}
The main idea of \cite{Svai08} is to use points in the graphs of
$A$ and $B$ to construct a sequence of Fej\'er approximations to
the so-called extended solution set 
\begin{equation}
\menge{(x,v^*)\in\HH\oplus\HH}
{-v^*\in Ax\:\;\text{and}\;v^*\in Bx}
\end{equation}
and to iterate by
projection onto these successive approximations. This extended 
solution set is actually nothing but the specialization of the
Kuhn-Tucker set 
\eqref{aqYg6521a} to the case when $\GG=\HH$ and $L=\Id$. 
This construction led to novel splitting methods for solving
\eqref{HJG7t6zzv02-20s} that do not seem to 
derive from the traditional methods mentioned above. In the present
paper, we extend it significantly beyond 
\eqref{HJG7t6zzv02-20s} in order to design new primal-dual 
splitting algorithms for Problem~\ref{prob:1}. 

The paper is organized as follows. Preliminary results are 
established in Section~\ref{sec:2} and algorithms for solving 
Problem~\ref{prob:1} are developed in Section~\ref{sec:3}. 
These results are then used in Section~\ref{sec:4} to solve 
systems of composite monotone inclusions in duality. 

\noindent
{\bfseries Notation.}
The scalar product of a Hilbert space is denoted by 
$\scal{\cdot}{\cdot}$ and the associated norm by $\|\cdot\|$.
The symbols $\weakly$ and $\to$ denote, respectively, weak and 
strong convergence, and $\Id$ denotes the identity operator. 
Let $\HH$ and $\GG$ be real Hilbert spaces, let $2^{\HH}$ be the 
power set of $\HH$, and let $A\colon\HH\to 2^{\HH}$. We denote by 
$\ran A=\menge{u\in\HH}{(\exi x\in\HH)\;u\in Ax}$ the range of $A$, 
by $\gra A=\menge{(x,u)\in\HH\times\HH}{u\in Ax}$ the graph of 
$A$, and by $A^{-1}$ the inverse of $A$, which is defined through
its graph $\menge{(u,x)\in\HH\times\HH}{(x,u)\in\gra A}$. 
The resolvent of $A$ is $J_A=(\Id+A)^{-1}$. We say that $A$ is 
monotone if
\begin{equation}
(\forall (x,u)\in\gra A)(\forall (y,v)\in\gra A)\quad
\scal{x-y}{u-v}\geq 0,
\end{equation}
and maximally monotone if there does not exist any monotone operator 
$B\colon\HH\to 2^{\HH}$ such that $\gra A\subset\gra B\neq\gra A$.
In this case, $J_A$ is firmly nonexpansive and defined everywhere
on $\HH$. The Hilbert direct sum of $\HH$ and $\GG$ is denoted by
$\HH\oplus\GG$. The projection operator onto a nonempty closed 
convex subset $C$ of $\HH$ is denoted by $P_C$. The necessary 
background on convex analysis and monotone operators will be found 
in \cite{Livre1}.

\section{Preliminary results}
\label{sec:2}

We first investigate some basic properties of Problem~\ref{prob:1}, 
starting with the fact that Kuhn-Tucker points automatically 
provide primal and dual solutions.

\begin{proposition}
\label{p:uuYt6y31z}
In the setting of Problem~\ref{prob:1}, the following hold:
\begin{enumerate}
\item
\label{p:uuYt6y31zi}
$\boldsymbol{Z}$ is a closed convex subset of 
$\mathscr{P}\times\mathscr{D}$. 
\item
\label{p:uuYt6y31zii}
$\mathscr{P}\neq\emp$ $\Leftrightarrow$
$\boldsymbol{Z}\neq\emp$ $\Leftrightarrow$
$\mathscr{D}\neq\emp$.
\end{enumerate}
\end{proposition}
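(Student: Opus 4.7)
My plan is to handle both parts by exploiting the reformulation of $\boldsymbol{Z}$ as the zero set of a particular maximally monotone operator on $\KKK=\HH\oplus\GG$. Define
\begin{equation*}
\boldsymbol{M}\colon(x,v^*)\mapsto(Ax+L^*v^*)\times(B^{-1}v^*-Lx).
\end{equation*}
Then $(x,v^*)\in\boldsymbol{Z}$ if and only if $(0,0)\in\boldsymbol{M}(x,v^*)$, so $\boldsymbol{Z}=\zer\boldsymbol{M}$. The operator $\boldsymbol{M}$ is the sum of $(x,v^*)\mapsto Ax\times B^{-1}v^*$, which is maximally monotone on $\KKK$ as a product of maximally monotone operators, and the bounded skew linear operator $(x,v^*)\mapsto(L^*v^*,-Lx)$, which is maximally monotone with full domain. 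Hence $\boldsymbol{M}$ is maximally monotone, and its zero set is closed and convex. This gives the topological part of \ref{p:uuYt6y31zi}.

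For the inclusion $\boldsymbol{Z}\subset\mathscr{P}\times\mathscr{D}$, I would argue directly from the definitions. If $(x,v^*)\in\boldsymbol{Z}$, then $-L^*v^*\in Ax$ and $Lx\in B^{-1}v^*$; the latter is equivalent to $v^*\in BLx$, whence $L^*v^*\in L^*BLx$ and therefore $0=-L^*v^*+L^*v^*\in Ax+L^*BLx$, so $x\in\mathscr{P}$. Symmetrically, $-L^*v^*\in Ax$ rewrites as $x\in A^{-1}(-L^*v^*)$, so $-Lx\in-LA^{-1}(-L^*v^*)$, and combining with $Lx\in B^{-1}v^*$ yields $0\in-LA^{-1}(-L^*v^*)+B^{-1}v^*$, i.e., $v^*\in\mathscr{D}$. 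This completes \ref{p:uuYt6y31zi}.

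Part \ref{p:uuYt6y31zii} then splits into two reciprocal implications; the direction $\boldsymbol{Z}\neq\emp\Rightarrow\mathscr{P}\neq\emp$ and $\boldsymbol{Z}\neq\emp\Rightarrow\mathscr{D}\neq\emp$ is already contained in \ref{p:uuYt6y31zi}. For $\mathscr{P}\neq\emp\Rightarrow\boldsymbol{Z}\neq\emp$, pick $x\in\mathscr{P}$. Since $0\in Ax+L^*BLx$, there exists $v^*\in BLx$ such that $-L^*v^*\in Ax$; the pair $(x,v^*)$ then lies in $\boldsymbol{Z}$ because $v^*\in BLx\Leftrightarrow Lx\in B^{-1}v^*$. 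An entirely analogous argument starting from $v^*\in\mathscr{D}$ produces $x\in A^{-1}(-L^*v^*)$ with $Lx\in B^{-1}v^*$, giving $(x,v^*)\in\boldsymbol{Z}$.

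There is no real obstacle here; the only delicate point is the convexity claim in \ref{p:uuYt6y31zi}, since $\boldsymbol{Z}$ is not obviously convex from its defining inclusions (graphs of $A$ and $B$ need not be convex). Packaging $\boldsymbol{Z}$ as $\zer\boldsymbol{M}$ for a maximally monotone $\boldsymbol{M}$ is the cleanest way around this, and the remaining verifications are routine algebraic manipulations of the inclusions.
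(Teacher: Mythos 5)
Your proposal is correct and complete. Note, however, that the paper does not actually prove this proposition: it simply cites \cite[Proposition~2.8]{Siop11} (and \cite{Penn00} for part \ref{p:uuYt6y31zii}), so what you have done is reconstruct the argument of the cited reference. Your reconstruction is faithful in spirit: writing $\boldsymbol{Z}=\zer\boldsymbol{M}$ with $\boldsymbol{M}=\boldsymbol{M}_1+\boldsymbol{S}$, where $\boldsymbol{M}_1\colon(x,v^*)\mapsto Ax\times B^{-1}v^*$ and $\boldsymbol{S}\colon(x,v^*)\mapsto(L^*v^*,-Lx)$, is precisely the ``monotone $+$ skew'' decomposition on which \cite{Siop11} is built; maximal monotonicity of the sum follows because the skew part is everywhere defined, and the zero set of a maximally monotone operator is closed and convex \cite[Proposition~23.39]{Livre1}. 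The remaining verifications (the inclusion $\boldsymbol{Z}\subset\mathscr{P}\times\mathscr{D}$ and the extraction of a Kuhn--Tucker point from a primal or dual solution) are exactly the routine unpackings you give, and they are carried out correctly. It is worth pointing out that the present paper offers, a posteriori, a second route to the closedness and convexity of $\boldsymbol{Z}$ that avoids the sum theorem altogether: Proposition~\ref{77htFq10}\ref{77htFq10iii} exhibits $\boldsymbol{Z}$ as $\bigcap_{\mathsf{a}\in\gra A}\bigcap_{\mathsf{b}\in\gra B}\boldsymbol{H}_{\mathsf{a},\mathsf{b}}$, an intersection of closed affine half-spaces (see Remark~\ref{8hChtFq08}\ref{8hChtFq08i}); interestingly, the proof of that result uses the maximal monotonicity of the product operator $\boldsymbol{M}_1$ alone, not of the full skew-perturbed $\boldsymbol{M}$. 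Either route is acceptable; yours has the advantage of also delivering part \ref{p:uuYt6y31zii} from the same bookkeeping.
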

\begin{proof}
This is {\rm\cite[Proposition~2.8]{Siop11}}
(see also \cite{Penn00} for \ref{p:uuYt6y31zii}).
\end{proof}

A fundamental concept in algorithmic nonlinear analysis is that
of Fej\'er monotonicity: a sequence $(\boldsymbol{x}_n)_{n\in\NN}$ 
in a Hilbert space $\HHH$ is said to be Fej\'er monotone with 
respect to a set $\boldsymbol{C}\subset\HHH$ if 
\begin{equation}
\label{ecnw7Gjh1120f}
(\forall{\boldsymbol{z}}\in{\boldsymbol{C}})(\forall n\in\NN)\quad
\|{\boldsymbol{x}}_{n+1}-{\boldsymbol{z}}\|\leq
\|{\boldsymbol{x}}_n-{\boldsymbol{z}}\|.
\end{equation}
Alternatively (see \cite[Section~2]{Moor01}), 
$(\boldsymbol{x}_n)_{n\in\NN}$ is Fej\'er monotone 
with respect to $\boldsymbol{C}$ if, for every $n\in\NN$, 
$\boldsymbol{x}_{n+1}$ is a relaxed projection of $\boldsymbol{x}_n$
onto a closed affine half-space $\boldsymbol{H}_n$ containing
$\boldsymbol{C}$, i.e.,
\begin{equation}
\label{ecnw7Gjh1120g}
(\forall n\in\NN)\quad \boldsymbol{x}_{n+1}=\boldsymbol{x}_n+
\lambda_n(P_{\boldsymbol{H}_n}\boldsymbol{x}_n-\boldsymbol{x}_n),
\quad\text{where}\quad 0\leq\lambda_n\leq2\quad\text{and}\quad
\boldsymbol{C}\subset\boldsymbol{H}_n.
\end{equation}
The half-spaces $(\boldsymbol{H}_n)_{n\in\NN}$ in 
\eqref{ecnw7Gjh1120g} are called Fej\'er approximations to 
$\boldsymbol{C}$. The Fej\'er monotonicity 
property \eqref{ecnw7Gjh1120f} makes it possible to greatly 
simplify the analysis of the asymptotic behavior of a broad class 
of algorithms; see \cite{Bau96a,Livre1,Aiep96,Eoop01,Ere68a,Erem09}
for background, examples, and historical notes.  

In the following proposition, we consider the problem of
constructing a Fej\'er approximation to the 
Kuhn-Tucker set \eqref{aqYg6521a}.

\begin{proposition}
\label{77htFq10}
In the setting of Problem~\ref{prob:1},
for every $\mathsf{a}=(a,a^*)\in\gra A$ and
$\mathsf{b}=(b,b^*)\in\gra B$, set
\begin{equation}
\label{kL8u8625}
\boldsymbol{H}_{\mathsf{a},\mathsf{b}}
=\Menge{\boldsymbol{x}\in\KKK}
{\scal{\boldsymbol{x}}{\boldsymbol{s}^*_{\mathsf{a},
\mathsf{b}}}\leq\eta_{\mathsf{a},\mathsf{b}}},
\quad\text{where}\quad
\begin{cases}
\boldsymbol{s}^*_{\mathsf{a},\mathsf{b}}
=(a^*+L^*b^*,b-La)\\
\eta_{\mathsf{a},\mathsf{b}}
=\scal{a}{a^*}+\scal{b}{b^*}.
\end{cases}
\end{equation}
Then the following hold:
\begin{enumerate}
\item
\label{77htFq10i}
Let $\mathsf{a}\in\gra A$ and $\mathsf{b}\in\gra B$. Then
$\boldsymbol{s}^*_{\mathsf{a},\mathsf{b}}
=\boldsymbol{0}\;\Leftrightarrow\;
\boldsymbol{H}_{\mathsf{a},\mathsf{b}}
=\KKK\;\Rightarrow\;(a,b^*)\in\boldsymbol{Z}
\;\text{and}\;\eta_{\mathsf{a},\mathsf{b}}=0$.
\item
\label{77htFq10ii}
Let $\mathsf{a}\in\gra A$ and $\mathsf{b}\in\gra B$. Then
$\boldsymbol{Z}\subset\boldsymbol{H}_{\mathsf{a},
\mathsf{b}}$\,.
\item
\label{77htFq10iii}
$\boldsymbol{Z}=\bigcap_{\mathsf{a}\in\gra A}
\bigcap_{\mathsf{b}\in\gra B}
\boldsymbol{H}_{\mathsf{a},\mathsf{b}}$\,.
\item
\label{77htFq10iv}
Let $(a,a^*)\in\gra A$, $(b,b^*)\in\gra B$, and 
$(x,v^*)\in\KKK$. Set $s^*=a^*+L^*b^*$, $t=b-La$, and
$\sigma=\sqrt{\|s^*\|^2+\|t\|^2}$; if $\sigma>0$, set
$\Delta=(\scal{x}{s^*}+\scal{t}{v^*}-\scal{a}{a^*}-\scal{b}{b^*})
/\sigma$. Then
\begin{equation}
\label{kL8u8626}
P_{\boldsymbol{H}_{\mathsf{a},\mathsf{b}}}(x,v^*)=
\begin{cases}
\big(x-(\Delta/\sigma)s^*,v^*-(\Delta/\sigma)t\big),
&\text{if}\;\;\sigma>0\;\;\text{and}\;\;\Delta>0;\\
(x,v^*),&\text{otherwise.}
\end{cases}
\end{equation}
\end{enumerate}
\end{proposition}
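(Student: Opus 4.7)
The plan is to dispatch \ref{77htFq10i}, \ref{77htFq10ii}, and \ref{77htFq10iv} by direct calculation and to concentrate the effort on \ref{77htFq10iii}, which is the only part requiring genuinely new input. For \ref{77htFq10i}, I would read $\boldsymbol{s}^*_{\mathsf{a},\mathsf{b}} = \boldsymbol{0}$ as the pair of relations $a^* = -L^*b^*$ and $b = La$, observe that together they cause $\eta_{\mathsf{a},\mathsf{b}}$ to cancel to $0$ so that the defining inequality of $\boldsymbol{H}_{\mathsf{a},\mathsf{b}}$ collapses to $0 \leq 0$, and read off $(a, b^*) \in \boldsymbol{Z}$ using $a^* \in Aa$ and $b^* \in Bb$; the converse implication $\boldsymbol{H}_{\mathsf{a},\mathsf{b}} = \KKK \Rightarrow \boldsymbol{s}^*_{\mathsf{a},\mathsf{b}} = \boldsymbol{0}$ is the elementary fact that a linear functional bounded above on all of $\KKK$ must be trivial. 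For \ref{77htFq10ii}, my approach is to unfold the defect $\scal{(x,v^*)}{\boldsymbol{s}^*_{\mathsf{a},\mathsf{b}}} - \eta_{\mathsf{a},\mathsf{b}}$ and rewrite $\scal{v^*}{b - La}$ as $\scal{L^*v^*}{x - a} - \scal{Lx - b}{v^*}$ to obtain the key identity
\[
\scal{(x,v^*)}{\boldsymbol{s}^*_{\mathsf{a},\mathsf{b}}} - \eta_{\mathsf{a},\mathsf{b}} = \scal{x-a}{a^* + L^*v^*} - \scal{Lx - b}{v^* - b^*}.
\]
The two summands can then be controlled separately: monotonicity of $A$ at the graph points $(a,a^*)$ and $(x,-L^*v^*)$ gives $\scal{x-a}{a^* + L^*v^*} \leq 0$, and monotonicity of $B$ at $(b, b^*)$ and $(Lx, v^*)$ gives $\scal{Lx - b}{v^* - b^*} \geq 0$, yielding the required $\leq 0$.

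For \ref{77htFq10iii}, one inclusion is \ref{77htFq10ii}. For the reverse, I would take $(x,v^*) \in \bigcap_{\mathsf{a},\mathsf{b}} \boldsymbol{H}_{\mathsf{a},\mathsf{b}}$; feeding this into the identity above produces
\[
\scal{x - a}{a^* + L^*v^*} \leq \scal{Lx - b}{v^* - b^*}
\qquad \text{for all } (a,a^*) \in \gra A,\; (b,b^*) \in \gra B,
\]
and the task is then to decouple this joint inequality into the two separate graph conditions $-L^*v^* \in Ax$ and $v^* \in B(Lx)$. This decoupling is the main obstacle, since the two sides involve independently chosen graph points. My plan is to argue by contradiction: if $-L^*v^* \notin Ax$, maximality of $A$ supplies some $(a_0,a_0^*) \in \gra A$ with $\scal{x - a_0}{a_0^* + L^*v^*} > 0$, which forces $\scal{Lx - b}{v^* - b^*} > 0$ for \emph{every} $(b,b^*) \in \gra B$; but the specific choice $b = J_B(Lx + v^*)$ and $b^* = Lx + v^* - b$ is a graph point satisfying $\scal{Lx - b}{v^* - b^*} = -\|Lx - b\|^2 \leq 0$, giving the desired contradiction. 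A symmetric argument using $J_A$ then produces $v^* \in B(Lx)$.

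Finally, for \ref{77htFq10iv} I would apply the standard projection formula onto a closed affine half-space. When $\sigma = 0$, \ref{77htFq10i} reduces the half-space to $\KKK$, so the projection is the identity. When $\sigma > 0$, $\boldsymbol{s}^*_{\mathsf{a},\mathsf{b}}$ is a nonzero normal of norm $\sigma$, and the projection of $(x,v^*)$ equals $(x, v^*)$ if $\Delta \leq 0$ and $(x, v^*) - (\Delta/\sigma)\boldsymbol{s}^*_{\mathsf{a},\mathsf{b}}$ if $\Delta > 0$; unpacking the pair componentwise recovers precisely \eqref{kL8u8626}.
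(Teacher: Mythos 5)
Your proposal is correct, and parts \ref{77htFq10i}, \ref{77htFq10ii}, and \ref{77htFq10iv} coincide in substance with the paper's argument: the identity you isolate, $\scal{(x,v^*)}{\boldsymbol{s}^*_{\mathsf{a},\mathsf{b}}}-\eta_{\mathsf{a},\mathsf{b}}=\scal{x-a}{a^*+L^*v^*}-\scal{Lx-b}{v^*-b^*}$, is exactly the paper's \eqref{kL8u8611a} up to sign, and the half-space projection formula is obtained the same way. The genuine divergence is in \ref{77htFq10iii}. The paper sidesteps your ``decoupling'' obstacle entirely: it introduces the product operator $\boldsymbol{M}\colon\KKK\to 2^{\KKK}\colon(z,w^*)\mapsto Az\times B^{-1}w^*$, notes that it is maximally monotone, and observes that the joint inequality states precisely that $((x,v^*),(-L^*v^*,Lx))$ is monotonically related to every point of $\gra\boldsymbol{M}$; maximality then yields both graph conditions in a single stroke. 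Your route decouples by contradiction, using the resolvents to manufacture test points that force one side of the inequality to have the wrong sign; this is sound (the resolvents are everywhere defined, and $\scal{Lx-b}{v^*-b^*}=-\|Lx-b\|^2$ for your choice $b=J_B(Lx+v^*)$, $b^*=Lx+v^*-b$ checks out), and it avoids invoking the maximal monotonicity of a product operator, though it implicitly relies on Minty's theorem through the resolvents, so it is not truly more elementary, and it costs two contradiction arguments where the paper needs none. A small streamlining available to you: once $-L^*v^*\in Ax$ is established, taking $(a,a^*)=(x,-L^*v^*)$ in the joint inequality gives $\scal{Lx-b}{v^*-b^*}\geq 0$ for every $(b,b^*)\in\gra B$, and maximality of $B$ concludes directly, making the second resolvent construction unnecessary.
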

\begin{proof}
\ref{77htFq10i}: Suppose that 
$\boldsymbol{s}^*_{\mathsf{a},\mathsf{b}}=\boldsymbol{0}$. 
Then $-L^*b^*=a^*\in Aa$ and $La=b\in B^{-1}b^*$. Hence,
\eqref{aqYg6521a} implies that $(a,b^*)\in\boldsymbol{Z}$. In 
addition, 
\begin{equation}
\eta_{\mathsf{a},\mathsf{b}}=\scal{a}{a^*}
+\scal{b}{b^*}=\scal{a}{-L^*b^*}+\scal{La}{b^*}=
-\scal{La}{b^*}+\scal{La}{b^*}=0
\end{equation}
and therefore
$\boldsymbol{H}_{\mathsf{a},\mathsf{b}}=\KKK$. Conversely, 
$\boldsymbol{H}_{\mathsf{a},\mathsf{b}}=\KKK$ $\Rightarrow$ 
$\boldsymbol{s}^*_{\mathsf{a},\mathsf{b}}=\boldsymbol{0}$
and $\eta_{\mathsf{a},\mathsf{b}}=0$.

\ref{77htFq10ii}:
Suppose that $\boldsymbol{x}=(x,v^*)\in\boldsymbol{Z}$.
Then $(x,-L^*v^*)\in\gra A$ and, by monotonicity of $A$,
\begin{equation}
\label{aqYg6521g}
\scal{a-x}{a^*+L^*v^*}\geq 0.
\end{equation}
Likewise, since $(Lx,v^*)\in\gra B$, we have  
\begin{equation}
\label{aqYg6521h}
\scal{b-Lx}{b^*-v^*}\geq 0.
\end{equation}
Using \eqref{aqYg6521g} and \eqref{aqYg6521h}, we obtain
\begin{align}
\label{aqYg6521i}
\sscal{\boldsymbol{x}}{\boldsymbol{s}^*_{\mathsf{a},\mathsf{b}}}
&=\scal{x}{a^*+L^*b^*}+\scal{b-La}{v^*}\nonumber\\
&=\scal{x}{a^*+L^*v^*}+\scal{Lx}{b^*-v^*}
+\scal{b-Lx}{v^*}+\scal{x-a}{L^*v^*}\nonumber\\
&=\scal{x-a}{a^*+L^*v^*}+\scal{a}{a^*}+\scal{La}{v^*}\nonumber\\
&\quad\;+\scal{Lx-b}{b^*-v^*}+\scal{b}{b^*}-\scal{b}{v^*}
+\scal{b-Lx}{v^*}+\scal{x-a}{L^*v^*}\nonumber\\
&\leq\scal{a}{a^*}+\scal{La-b}{v^*}+\scal{b}{b^*}
+\scal{b-Lx}{v^*}+\scal{x-a}{L^*v^*}\nonumber\\
&=\scal{a}{a^*}+\scal{b}{b^*}\nonumber\\
&=\eta_{\mathsf{a},\mathsf{b}}.
\end{align}
Thus, $\boldsymbol{x}\in\boldsymbol{H}_{\mathsf{a},\mathsf{b}}\,$.

\ref{77htFq10iii}:
By \ref{77htFq10ii}, 
$\boldsymbol{Z}\subset\bigcap_{\mathsf{a}\in\gra A}
\bigcap_{\mathsf{b}\in\gra B}
\boldsymbol{H}_{\mathsf{a},\mathsf{b}}$. Conversely, 
fix $\mathsf{a}\in\gra A$ and $\mathsf{b}\in\gra B$, and let 
$\boldsymbol{x}=(x,v^*)\in\boldsymbol{H}_{\mathsf{a},
\mathsf{b}}$.
Then $\sscal{\boldsymbol{x}}{\boldsymbol{s}^*_{\mathsf{a},
\mathsf{b}}}\leq\eta_{\mathsf{a},\mathsf{b}}$ 
and therefore
\begin{align}
\label{kL8u8611a}
\scal{(a,b^*)-(x,v^*)}{(a^*,b)-(-L^*v^*,Lx)}
&=\scal{(a-x,b^*-v^*)}{(a^*+L^*v^*,b-Lx)}\nonumber\\
&=\scal{a-x}{a^*+L^*v^*}+\scal{b-Lx}{b^*-v^*}\nonumber\\
&=\eta_{\mathsf{a},\mathsf{b}}-\scal{\boldsymbol{x}}
{\boldsymbol{s}^*_{\mathsf{a},\mathsf{b}}}\nonumber\\
&\geq 0.
\end{align}
Now set $\boldsymbol{M}\colon\KKK\to 2^{\KKK}\colon (z,w^*)\mapsto 
Az\times B^{-1}w^*$. Then, since $((a,b^*),(a^*,b))$ is an 
arbitrary point in $\gra\boldsymbol{M}$ and since 
\cite[Propositions~20.22 and 20.23]{Livre1} imply that
$\boldsymbol{M}$ is maximally monotone, we derive
from \eqref{kL8u8611a} that
$((x,v^*),(-L^*v^*,Lx))\in\gra\boldsymbol{M}$, i.e., 
that $\boldsymbol{x}\in\boldsymbol{Z}$.

\ref{77htFq10iv}: Let $\boldsymbol{x}\in\KKK$. As seen in 
\ref{77htFq10i}, if 
$\boldsymbol{s}_{\mathsf{a},\mathsf{b}}^*=\boldsymbol{0}$, then 
$\eta_{\mathsf{a},\mathsf{b}}=0$ and 
$\boldsymbol{H}_{\mathsf{a},\mathsf{b}}=\KKK$. Hence
$\sscal{\boldsymbol{x}}{\boldsymbol{s}_{\mathsf{a},\mathsf{b}}^*}=
\eta_{\mathsf{a},\mathsf{b}}$ and 
$P_{\boldsymbol{H}_{\mathsf{a},\mathsf{b}}}\boldsymbol{x}
=\boldsymbol{x}$. Otherwise, it follows from 
\cite[Example~28.16]{Livre1} that
\begin{equation}
\label{kL8u8627}
P_{\boldsymbol{H}_{\mathsf{a},\mathsf{b}}}\boldsymbol{x}=
\begin{cases}
\boldsymbol{x}-\Frac{\sscal{\boldsymbol{x}}
{\boldsymbol{s}^*_{\mathsf{a},\mathsf{b}}}-
\eta_{\mathsf{a},\mathsf{b}}}
{\|\boldsymbol{s}_{\mathsf{a},\mathsf{b}}^*\|^2}
\boldsymbol{s}_{\mathsf{a},\mathsf{b}}^*\,,&
\text{if}\;\;\sscal{\boldsymbol{x}}
{\boldsymbol{s}_{\mathsf{a},\mathsf{b}}^*}>
\eta_{\mathsf{a},\mathsf{b}}\,;\\
\boldsymbol{x},&\text{otherwise.}
\end{cases}
\end{equation}
In view of \eqref{kL8u8625}, the proof is complete.
\end{proof}

\begin{remark}
\label{8hChtFq08}\
\rm
\begin{enumerate}
\item
\label{8hChtFq08i}
The fact that $\boldsymbol{Z}$ is closed and convex 
(Proposition~\ref{p:uuYt6y31z}\ref{p:uuYt6y31zi}) 
is also apparent in 
Proposition~\ref{77htFq10}\ref{77htFq10iii}, which 
exhibits $\boldsymbol{Z}$ as an intersection of closed affine
half-spaces.
\item
\label{8hChtFq08ii}
The inclusion 
$\boldsymbol{Z}\subset\boldsymbol{H}_{\mathsf{a},\mathsf{b}}$ 
(Proposition~\ref{77htFq10}\ref{77htFq10i}) will play a 
key role in the paper. This construction is inspired by that of
\cite[Lemma~3]{Svai08}, where $\GG=\HH$ and $L=\Id$.
\end{enumerate}
\end{remark}

Our analysis will require the following asymptotic principle, 
which is of interest in its own right. 

\begin{proposition}
\label{H7gg5slbu515}
In the setting of Problem~\ref{prob:1}, 
let $(a_n,a_n^*)_{n\in\NN}$ be a sequence in $\gra A$, 
let $(b_n,b_n^*)_{n\in\NN}$ be a sequence in $\gra B$, 
and let $(\overline{x},\overline{v}^*)\in\KKK$. 
Suppose that $a_n\weakly\overline{x}$, $b^*_n\weakly\overline{v}^*$, 
$a^*_n+L^*b^*_n\to 0$, and $La_n-b_n\to 0$.
Then $\scal{a_n}{a_n^*}+\scal{b_n}{b_n^*}\to 0$ 
and $(\overline{x},\overline{v}^*)\in\boldsymbol{Z}$. 
\end{proposition}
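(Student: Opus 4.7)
The strategy is to recognize that the hypotheses furnish a sequence in the graph of the maximally monotone product operator $\boldsymbol{M}\colon\KKK\to 2^{\KKK}\colon(z,w^*)\mapsto Az\times B^{-1}w^*$ introduced in the proof of Proposition~\ref{77htFq10}\ref{77htFq10iii}, and to pass to the limit using the weak--weak closure principle for maximally monotone operators. Indeed, $(a_n,a_n^*)\in\gra A$ and $(b_n,b_n^*)\in\gra B$ jointly yield $((a_n,b_n^*),(a_n^*,b_n))\in\gra\boldsymbol{M}$, while the desired conclusion $(\overline{x},\overline{v}^*)\in\boldsymbol{Z}$ is, by \eqref{aqYg6521a}, equivalent to $((\overline{x},\overline{v}^*),(-L^*\overline{v}^*,L\overline{x}))\in\gra\boldsymbol{M}$.

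First I would dispose of the scalar claim. Setting $\varepsilon_n=a_n^*+L^*b_n^*$ and $\delta_n=La_n-b_n$, both tending to $0$ strongly, a direct substitution using the adjoint identity gives
\begin{equation*}
\scal{a_n}{a_n^*}+\scal{b_n}{b_n^*}
=\scal{a_n}{-L^*b_n^*+\varepsilon_n}+\scal{La_n-\delta_n}{b_n^*}
=\scal{a_n}{\varepsilon_n}-\scal{\delta_n}{b_n^*},
\end{equation*}
and the right-hand side vanishes in the limit because $(a_n)_{n\in\NN}$ and $(b_n^*)_{n\in\NN}$ are weakly convergent, hence bounded.

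Next I would pass to the limit in $\gra\boldsymbol{M}$. The weak continuity of $L$ and $L^*$, combined with the hypotheses, yields $(a_n,b_n^*)\weakly(\overline{x},\overline{v}^*)$ and $(a_n^*,b_n)\weakly(-L^*\overline{v}^*,L\overline{x})$ in $\KKK$. The previous step reads
\begin{equation*}
\sscal{(a_n,b_n^*)}{(a_n^*,b_n)}\to 0
=\sscal{(\overline{x},\overline{v}^*)}{(-L^*\overline{v}^*,L\overline{x})},
\end{equation*}
the final equality being the same adjoint cancellation. Applying the weak--weak plus inner-product closure property of maximally monotone graphs (a standard consequence of the results in \cite{Livre1}) to $\boldsymbol{M}$, we conclude that $((\overline{x},\overline{v}^*),(-L^*\overline{v}^*,L\overline{x}))\in\gra\boldsymbol{M}$, i.e., $(\overline{x},\overline{v}^*)\in\boldsymbol{Z}$.

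The main obstacle is choosing the correct limiting principle: since $(a_n^*,b_n)$ is known to converge only weakly (and not strongly) to $(-L^*\overline{v}^*,L\overline{x})$, the usual weak--strong closure of maximal monotone graphs does not suffice. The purpose of the strong convergence of the residuals $a_n^*+L^*b_n^*$ and $La_n-b_n$ is precisely to force the convergence of the inner products in the first step, which then unlocks the stronger weak--weak closure result needed to pin down the limit inside $\gra\boldsymbol{M}$.
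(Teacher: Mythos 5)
Your proof is correct, but it takes a genuinely different route from the paper's. You work with the operator $\boldsymbol{M}\colon(z,w^*)\mapsto Az\times B^{-1}w^*$ on $\KKK=\HH\oplus\GG$ (the same one the paper uses to prove Proposition~\ref{77htFq10}\ref{77htFq10iii}, where its maximal monotonicity is already recorded), verify the inner-product convergence $\sscal{(a_n,b_n^*)}{(a_n^*,b_n)}\to 0=\sscal{(\overline{x},\overline{v}^*)}{(-L^*\overline{v}^*,L\overline{x})}$ by a direct adjoint cancellation, and then close with the standard criterion that a maximally monotone graph is sequentially closed under joint weak convergence provided $\limsup_n\scal{x_n}{u_n}\leq\scal{x}{u}$ (this is indeed in \cite{Livre1}, and its proof is the one-line monotonicity estimate $\scal{x_n-y}{u_n-v}\geq 0$ passed to the limit). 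The paper instead encodes $\boldsymbol{Z}$ through the operator $\boldsymbol{A}\colon(x,y)\mapsto Ax\times By$ on $\HH\oplus\GG$ together with the subspace $\boldsymbol{V}=\menge{(x,y)}{Lx=y}$ and its orthogonal complement, computes $P_{\boldsymbol{V}}$ and $P_{\boldsymbol{V}^\bot}$ explicitly, and invokes the subspace-decomposition asymptotic principle \cite[Proposition~25.3]{Livre1}, which delivers both conclusions at once. Your argument is more elementary and self-contained: it avoids the projection formulas and the specialized Proposition~25.3, and it obtains the scalar claim $\scal{a_n}{a_n^*}+\scal{b_n}{b_n^*}\to 0$ by hand rather than as a byproduct. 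What the paper's route buys is the explicit link to the $\boldsymbol{V}\oplus\boldsymbol{V}^\bot$ decomposition used in \cite{Optl14} and the immediate identification of the result as a generalization of \cite{Baus09}. The only point you should tighten is the citation of the closure principle: name it precisely (it is the weak--weak closedness of $\gra\boldsymbol{M}$ under the $\limsup$ condition on inner products) rather than calling it ``a standard consequence,'' since it is the only nonelementary ingredient in your argument.
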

\begin{proof}
Define
\begin{equation}
\label{eyoi8Yt533815a}
\boldsymbol{V}=\menge{(x,y)\in\KKK}{Lx=y}.
\end{equation}
Then
\begin{equation}
\label{eyoi8Yt533815b}
\boldsymbol{V}^\bot=\menge{(u^*,v^*)\in\KKK}{u^*=-L^*v^*}.
\end{equation}
Now set
\begin{equation}
\label{eyoi8Yt533815d}
\boldsymbol{A}\colon\KKK\to 2^{\KKK}\colon(x,y)\mapsto Ax\times By.
\end{equation}
We deduce from \eqref{aqYg6521a} that, for every 
$(x,v^*)\in\KKK$,
\begin{eqnarray}
\label{eyoi8Yt533815r}
(x,v^*)\in\boldsymbol{Z}
&\Leftrightarrow&
\begin{cases}
(x,-L^*v^*)\in\gra A\\
(Lx,v^*)\in\gra B
\end{cases}
\nonumber\\
&\Leftrightarrow&
(\boldsymbol{x},\boldsymbol{u})=
\big((x,Lx),(-L^*v^*,v^*)\big)
\in(\boldsymbol{V}\times
\boldsymbol{V}^\bot)\cap\gra\boldsymbol{A}.
\end{eqnarray}
On the other hand, \cite[Lemma~3.1]{Optl14} asserts that
\begin{align}
\label{eyoi8Yt533815s}
(\forall (x,y)\in\KKK)\quad 
\begin{cases}
P_{\boldsymbol{V}}(x,y)
=\big((\Id+L^*L)^{-1}(x+L^*y),L(\Id+L^*L)^{-1}(x+L^*y)\big)\\
P_{\boldsymbol{V}^\bot}(x,y)
=\big(L^*(\Id+LL^*)^{-1}(Lx-y),-(\Id+LL^*)^{-1}(Lx-y)\big).
\end{cases}
\end{align}
Now set
\begin{equation}
\label{eyoi8Yt533816e}
\boldsymbol{\overline{x}}=(\overline{x},L\overline{x}),\quad 
\boldsymbol{\overline{u}}=(-L^*\overline{v}^*,\overline{v}^*),
\quad\text{and}\quad
(\forall n\in\NN)\quad
\begin{cases}
\boldsymbol{x}_n=(a_n,b_n)\\
\boldsymbol{u}_n=(a^*_n,b_n^*).
\end{cases}
\end{equation}
Since $a^*_n+L^*b^*_n\to 0$ and $La_n-b_n\to 0$, we derive from
\eqref{eyoi8Yt533815s} that 
$P_{\boldsymbol{V}}\boldsymbol{u}_n\to\boldsymbol{0}$
and $P_{\boldsymbol{V}^\bot}\boldsymbol{x}_n\to\boldsymbol{0}$.
Altogether, since $L$ and $L^*$ are weakly continuous, the 
assumptions yield
\begin{equation}
\label{eyoi8Yt533815e}
\begin{cases}
(\forall n\in\NN)\;\:(\boldsymbol{x}_n,\boldsymbol{u}_n)
\in\gra\boldsymbol{A}\\
\boldsymbol{x}_n\weakly
\boldsymbol{\overline{x}}\\
\boldsymbol{u}_n\weakly\boldsymbol{\overline{u}}\\
P_{\boldsymbol{V}^\bot}\boldsymbol{x}_n\to\boldsymbol{0}\\
P_{\boldsymbol{V}}\boldsymbol{u}_n\to\boldsymbol{0}.
\end{cases}
\end{equation}
However, \eqref{eyoi8Yt533815e} and 
\cite[Proposition~25.3]{Livre1} imply that
\begin{equation}
\label{eyoi8Yt533815m}
\scal{\boldsymbol{x}_n}{\boldsymbol{u}_n}\to 0
\quad\text{and}\quad
(\boldsymbol{\overline{x}},\boldsymbol{\overline{u}})
\in(\boldsymbol{V}\times\boldsymbol{V}^\bot)\cap\gra\boldsymbol{A}.
\end{equation}
In view of \eqref{eyoi8Yt533815r}, the proof is complete. 
\end{proof}

\begin{remark}
\rm
In the special case when $\GG=\HH$ and $L=\Id$, 
Proposition~\ref{H7gg5slbu515} reduces to
\cite[Corollary~3]{Baus09} (see also \cite[Corollary~25.5]{Livre1}
for an alternate proof), where $m=2$.
The decomposition $\KKK=\boldsymbol{V}\oplus\boldsymbol{V}^\bot$, 
where $\boldsymbol{V}$ is as in \eqref{eyoi8Yt533815a}, is used 
in \cite{Optl14} in a different context.
\end{remark}

\section{Finding Kuhn-Tucker points by Fej\'er approximations}
\label{sec:3}

In view of
Proposition~\ref{p:uuYt6y31z}\ref{p:uuYt6y31zi}, 
Problem~\ref{prob:1} reduces to finding a point in a nonempty 
closed convex subset of a Hilbert space. This can be achieved 
via the following generic Fej\'er-monotone algorithm.

\begin{proposition}{\rm\cite{Eoop01}}
\label{p:1999}
Let $\HHH$ be a real Hilbert space, let $\boldsymbol{C}$ be a
nonempty closed convex subset of $\HHH$, and let 
$\boldsymbol{x}_0\in\HHH$. Iterate 
\begin{equation}
\label{aqYg6520e}
\begin{array}{l}
\text{for}\;n=0,1,\ldots\\
\left\lfloor
\begin{array}{l}
\text{$\boldsymbol{H}_n$ is a closed affine half-space such that 
$\boldsymbol{C}\subset\boldsymbol{H}_n$}\\
\lambda_n\in\left]0,2\right[\\
\boldsymbol{x}_{n+1}=\boldsymbol{x}_n+
\lambda_n(P_{\boldsymbol{H}_n}\boldsymbol{x}_n-\boldsymbol{x}_n).
\end{array}
\right.\\
\end{array}
\end{equation}
Then the following hold:
\begin{enumerate}
\item
$(\boldsymbol{x}_n)_{n\in\NN}$ is Fej\'er monotone with 
respect to $\boldsymbol{C}$:
$(\forall\boldsymbol{z}\in\boldsymbol{C})(\forall n\in\NN)$
$\|\boldsymbol{x}_{n+1}-\boldsymbol{z}\|\leq
\|\boldsymbol{x}_n-\boldsymbol{z}\|$.
\item
$\sum_{n\in\NN}\lambda_n(2-\lambda_n)\|P_{\boldsymbol{H}_n}
\boldsymbol{x}_{n}-\boldsymbol{x}_n\|^2<\pinf$.
\item
Suppose that, for every $\boldsymbol{x}\in\HHH$ and every 
strictly increasing sequence $(k_n)_{n\in\NN}$ in $\NN$, 
$\boldsymbol{x}_{k_n}\weakly\boldsymbol{x}$
$\Rightarrow$ $\boldsymbol{x}\in\boldsymbol{C}$.
Then $(\boldsymbol{x}_n)_{n\in\NN}$ converges weakly to a point in
$\boldsymbol{C}$.
\end{enumerate}
\end{proposition}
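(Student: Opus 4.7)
The plan is to derive the standard quasi-Fej\'er inequality
\begin{equation}
\label{eplanqF}
\|\boldsymbol{x}_{n+1}-\boldsymbol{z}\|^2\leq\|\boldsymbol{x}_n-\boldsymbol{z}\|^2-\lambda_n(2-\lambda_n)\|P_{\boldsymbol{H}_n}\boldsymbol{x}_n-\boldsymbol{x}_n\|^2
\end{equation}
for every $\boldsymbol{z}\in\boldsymbol{C}$ and every $n\in\NN$, and then to read off assertions (i)--(iii) from it. To obtain \eqref{eplanqF} I would fix $\boldsymbol{z}\in\boldsymbol{C}\subset\boldsymbol{H}_n$ and write $\boldsymbol{p}_n=P_{\boldsymbol{H}_n}\boldsymbol{x}_n$. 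The projection characterization on the closed convex set $\boldsymbol{H}_n$ gives $\scal{\boldsymbol{x}_n-\boldsymbol{p}_n}{\boldsymbol{z}-\boldsymbol{p}_n}\leq 0$, which, combined with the identity $\scal{\boldsymbol{p}_n-\boldsymbol{x}_n}{\boldsymbol{x}_n-\boldsymbol{z}}=-\|\boldsymbol{p}_n-\boldsymbol{x}_n\|^2+\scal{\boldsymbol{p}_n-\boldsymbol{x}_n}{\boldsymbol{p}_n-\boldsymbol{z}}$, yields $\scal{\boldsymbol{p}_n-\boldsymbol{x}_n}{\boldsymbol{x}_n-\boldsymbol{z}}\leq-\|\boldsymbol{p}_n-\boldsymbol{x}_n\|^2$. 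Expanding $\|\boldsymbol{x}_{n+1}-\boldsymbol{z}\|^2=\|\boldsymbol{x}_n-\boldsymbol{z}\|^2+2\lambda_n\scal{\boldsymbol{p}_n-\boldsymbol{x}_n}{\boldsymbol{x}_n-\boldsymbol{z}}+\lambda_n^2\|\boldsymbol{p}_n-\boldsymbol{x}_n\|^2$ and substituting the previous bound produces \eqref{eplanqF}.

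Assertion (i) is immediate from \eqref{eplanqF} since $\lambda_n(2-\lambda_n)\geq 0$. For (ii) I would telescope \eqref{eplanqF}: for any fixed $\boldsymbol{z}\in\boldsymbol{C}$, the sequence $(\|\boldsymbol{x}_n-\boldsymbol{z}\|^2)_{n\in\NN}$ is nonincreasing, hence convergent, and summing \eqref{eplanqF} over $n$ gives
\begin{equation}
\sum_{n\in\NN}\lambda_n(2-\lambda_n)\|P_{\boldsymbol{H}_n}\boldsymbol{x}_n-\boldsymbol{x}_n\|^2\leq\|\boldsymbol{x}_0-\boldsymbol{z}\|^2-\lim_{n\to\pinf}\|\boldsymbol{x}_n-\boldsymbol{z}\|^2<\pinf.
\end{equation}

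The main work is (iii), which is an Opial-type weak convergence argument. From (i), $(\boldsymbol{x}_n)_{n\in\NN}$ is bounded, so it has at least one weak sequential cluster point; the hypothesis forces every such cluster point to lie in $\boldsymbol{C}$. To upgrade this to weak convergence I would argue by contradiction: suppose two distinct cluster points $\boldsymbol{x},\boldsymbol{y}\in\boldsymbol{C}$ exist, reached along subsequences $\boldsymbol{x}_{k_n}\weakly\boldsymbol{x}$ and $\boldsymbol{x}_{l_n}\weakly\boldsymbol{y}$. The polarization identity
\begin{equation}
\|\boldsymbol{x}_n-\boldsymbol{x}\|^2-\|\boldsymbol{x}_n-\boldsymbol{y}\|^2=2\scal{\boldsymbol{x}_n}{\boldsymbol{y}-\boldsymbol{x}}+\|\boldsymbol{x}\|^2-\|\boldsymbol{y}\|^2,
\end{equation}
combined with the convergence of $(\|\boldsymbol{x}_n-\boldsymbol{x}\|)_{n\in\NN}$ and $(\|\boldsymbol{x}_n-\boldsymbol{y}\|)_{n\in\NN}$ from Fej\'er monotonicity, shows that the linear functional $n\mapsto\scal{\boldsymbol{x}_n}{\boldsymbol{y}-\boldsymbol{x}}$ converges. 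Taking the limit along $(k_n)$ and $(l_n)$ yields $\scal{\boldsymbol{x}}{\boldsymbol{y}-\boldsymbol{x}}=\scal{\boldsymbol{y}}{\boldsymbol{y}-\boldsymbol{x}}$, i.e., $\|\boldsymbol{x}-\boldsymbol{y}\|^2=0$, a contradiction.

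No step presents a genuine obstacle: the result is essentially the classical Fej\'er machinery transcribed from \cite{Eoop01}. The only points requiring care are the correct handling of the projection inequality on the affine half-space $\boldsymbol{H}_n$ in deriving \eqref{eplanqF} and the unique-limit argument in (iii), the latter being the only place where the weak-cluster-point hypothesis is actually used.
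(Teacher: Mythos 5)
Your proof is correct. The paper offers no proof of its own for this proposition—it simply cites \cite{Eoop01}—and your argument (the quasi-Fej\'er inequality $\|\boldsymbol{x}_{n+1}-\boldsymbol{z}\|^2\leq\|\boldsymbol{x}_n-\boldsymbol{z}\|^2-\lambda_n(2-\lambda_n)\|P_{\boldsymbol{H}_n}\boldsymbol{x}_n-\boldsymbol{x}_n\|^2$ obtained from the projection variational inequality, followed by telescoping and the Opial-type uniqueness-of-weak-cluster-point argument) is exactly the classical Fej\'er machinery that the cited reference contains.
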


We now derive from the above convergence principle a conceptual 
primal-dual splitting framework.

\begin{proposition}
\label{jkj66ybnCxZ-22}
Consider the setting of Problem~\ref{prob:1}.
Suppose that $\mathscr{P}\neq\emp$, let $x_0\in\HH$, let 
$v_0^*\in\GG$, and iterate
\begin{equation}
\label{aqYg6522e}
\begin{array}{l}
\text{for}\;n=0,1,\ldots\\
\left\lfloor
\begin{array}{l}
(a_n,a_n^*)\in\gra A\\
(b_n,b_n^*)\in\gra B\\
s^*_n=a^*_{n}+L^*b^*_{n}\\
t_n=b_{n}-La_{n}\\
\sigma_n=\sqrt{\|s_n^*\|^2+\|t_n\|^2}\\
\text{if}\;\sigma_n=0\\
\left\lfloor
\begin{array}{l}
\overline{x}=a_n\\
\overline{v}^*=b^*_n\\
\text{terminate}.
\end{array}
\right.\\
\text{if}\;\sigma_n>0\\
\left\lfloor
\begin{array}{l}
\lambda_n\in\left]0,2\right[\\
\Delta_n=\text{\rm max}\big\{0,(\scal{x_n}{s^*_n}+\scal{t_n}{v^*_n}
-\scal{a_n}{a^*_n}-\scal{b_n}{b^*_n})/\sigma_n\big\}\\[2mm]
\theta_n=\lambda_n\Delta_n/\sigma_n\\
x_{n+1}=x_n-\theta_n s^*_n\\
v^*_{n+1}=v^*_n-\theta_n t_n.
\end{array}
\right.\\[8mm]
\end{array}
\right.\\[4mm]
\end{array}
\end{equation}
Then either \eqref{aqYg6522e} terminates at a solution 
$(\overline{x},\overline{v}^*)\in\boldsymbol{Z}$ in a finite 
number of iterations or it generates infinite sequences 
$(x_n)_{n\in\NN}$ and $(v_n^*)_{n\in\NN}$ such that the following
hold:
\begin{enumerate}
\item
\label{jkj66ybnCxZ-22i}
$(x_n,v_n^*)_{n\in\NN}$ is Fej\'er monotone with respect to
$\boldsymbol{Z}$.
\item
\label{jkj66ybnCxZ-22ii}
$\sum_{n\in\NN}\lambda_n(2-\lambda_n)\Delta_n^2<\pinf$.
\item
\label{jkj66ybnCxZ-22iii}
Suppose that for every $x\in\HH$, every $v^*\in\GG$, and every 
strictly increasing sequence $(k_n)_{n\in\NN}$ in $\NN$, 
\begin{equation}
\label{kL8u8627a}
\big[\:x_{k_n}\weakly x\;\;\text{and}\;\;v^*_{k_n}\weakly v^*\:\big]
\quad\Rightarrow\quad (x,v^*)\in\boldsymbol{Z}.
\end{equation}
Then $(x_n)_{n\in\NN}$ converges weakly to a point 
$\overline{x}\in\mathscr{P}$, $(v_n^*)_{n\in\NN}$ converges 
weakly to a point $\overline{v}^*\in\mathscr{D}$, and 
$(\overline{x},\overline{v}^*)\in\boldsymbol{Z}$.
\end{enumerate}
\end{proposition}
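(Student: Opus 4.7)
The plan is to recognize iteration \eqref{aqYg6522e} as an instance of the abstract Fej\'er scheme \eqref{aqYg6520e} of Proposition~\ref{p:1999}, with $\HHH=\KKK$, $\boldsymbol{C}=\boldsymbol{Z}$, and $\boldsymbol{H}_n=\boldsymbol{H}_{\mathsf{a}_n,\mathsf{b}_n}$ given by Proposition~\ref{77htFq10}, where $\mathsf{a}_n=(a_n,a_n^*)$ and $\mathsf{b}_n=(b_n,b_n^*)$. First, note that by Proposition~\ref{p:uuYt6y31z}, $\mathscr{P}\neq\emp$ implies $\boldsymbol{Z}\neq\emp$, so Fej\'er monotonicity with respect to $\boldsymbol{Z}$ is meaningful. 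For the termination branch, when $\sigma_n=0$ one has $\boldsymbol{s}^*_{\mathsf{a}_n,\mathsf{b}_n}=\boldsymbol{0}$, and Proposition~\ref{77htFq10}\ref{77htFq10i} immediately yields $(a_n,b_n^*)\in\boldsymbol{Z}$, validating the returned pair.

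For the case of infinite iterations, I would use Proposition~\ref{77htFq10}\ref{77htFq10iv} to identify the projection formula. Writing $\boldsymbol{x}_n=(x_n,v_n^*)$, one checks that $\Delta_n$ as defined in \eqref{aqYg6522e} equals $\max\{0,(\sscal{\boldsymbol{x}_n}{\boldsymbol{s}^*_{\mathsf{a}_n,\mathsf{b}_n}}-\eta_{\mathsf{a}_n,\mathsf{b}_n})/\sigma_n\}$, and hence \eqref{kL8u8626} gives
\begin{equation*}
P_{\boldsymbol{H}_n}\boldsymbol{x}_n-\boldsymbol{x}_n
=-(\Delta_n/\sigma_n)\bigl(s_n^*,t_n\bigr),
\end{equation*}
so the primal-dual update in \eqref{aqYg6522e} is precisely $\boldsymbol{x}_{n+1}=\boldsymbol{x}_n+\lambda_n(P_{\boldsymbol{H}_n}\boldsymbol{x}_n-\boldsymbol{x}_n)$. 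Moreover, $\|P_{\boldsymbol{H}_n}\boldsymbol{x}_n-\boldsymbol{x}_n\|^2=\Delta_n^2(\|s_n^*\|^2+\|t_n\|^2)/\sigma_n^2=\Delta_n^2$. Proposition~\ref{77htFq10}\ref{77htFq10ii} guarantees $\boldsymbol{Z}\subset\boldsymbol{H}_n$, so Proposition~\ref{p:1999} applies, yielding \ref{jkj66ybnCxZ-22i} and, via the norm identity above, \ref{jkj66ybnCxZ-22ii}.

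For \ref{jkj66ybnCxZ-22iii}, the hypothesis \eqref{kL8u8627a} is exactly the weak-cluster-point condition required by Proposition~\ref{p:1999}, since every weak cluster point of $(\boldsymbol{x}_n)_{n\in\NN}$ in $\KKK$ is of the form $(x,v^*)$ with $x_{k_n}\weakly x$ and $v^*_{k_n}\weakly v^*$ along some subsequence. Therefore $(\boldsymbol{x}_n)_{n\in\NN}$ converges weakly to some $(\overline{x},\overline{v}^*)\in\boldsymbol{Z}$, which by Proposition~\ref{p:uuYt6y31z}\ref{p:uuYt6y31zi} lies in $\mathscr{P}\times\mathscr{D}$.

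There is no real obstacle beyond bookkeeping: the nontrivial analytical content has been isolated in Propositions~\ref{p:uuYt6y31z}, \ref{77htFq10}, and \ref{p:1999}. The only point that requires care is verifying that the ad hoc definitions of $\Delta_n$ and $\theta_n$ in \eqref{aqYg6522e} match the relaxed-projection prescription of \eqref{aqYg6520e}, including the $\max\{0,\cdot\}$ truncation that covers the ``otherwise'' branch in \eqref{kL8u8626}; the $\sigma_n^{-1}$ inside $\Delta_n$ and a second $\sigma_n^{-1}$ absorbed in $\theta_n=\lambda_n\Delta_n/\sigma_n$ together reproduce the $\sscal{\boldsymbol{x}_n}{\boldsymbol{s}^*_n}-\eta_n$ over $\|\boldsymbol{s}^*_n\|^2$ scaling of \eqref{kL8u8627}.
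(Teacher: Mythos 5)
Your proposal is correct and follows essentially the same route as the paper's own proof: both recognize \eqref{aqYg6522e} as the abstract Fej\'er scheme of Proposition~\ref{p:1999} applied with $\boldsymbol{H}_n=\boldsymbol{H}_{\mathsf{a}_n,\mathsf{b}_n}$, invoke Proposition~\ref{77htFq10}\ref{77htFq10i}--\ref{77htFq10ii} for the termination branch and the inclusion $\boldsymbol{Z}\subset\boldsymbol{H}_n$, and use Proposition~\ref{77htFq10}\ref{77htFq10iv} to verify $\Delta_n=\|P_{\boldsymbol{H}_n}\boldsymbol{x}_n-\boldsymbol{x}_n\|$ and the relaxed-projection form of the update. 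The bookkeeping identifying the $\max\{0,\cdot\}$ truncation and the two factors of $\sigma_n^{-1}$ is exactly the verification the paper carries out in \eqref{kL8u8629a}.
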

\begin{proof}
We first observe that, by Proposition~\ref{p:uuYt6y31z},
$\boldsymbol{Z}$ is nonempty, closed, and convex. 
Two alternatives are possible. 
First, suppose that, for some $n\in\NN$, $\sigma_n=0$. 
Then Proposition~\ref{77htFq10}\ref{77htFq10i} asserts that 
the algorithm terminates at 
$(\overline{x},\overline{v}^*)=(a_n,b_n^*)\in\boldsymbol{Z}$.
Now suppose that $(\forall n\in\NN)$ $\sigma_n>0$.
For every $n\in\NN$, set 
\begin{equation}
\boldsymbol{x}_n=(x_n,v_n^*),\;\;
\boldsymbol{s}_n^*=(s_n^*,t_n),
\quad\text{and}\quad
\eta_n=\scal{a_n}{a^*_n}+\scal{b_n}{b^*_n}, 
\end{equation}
and define
\begin{equation}
\boldsymbol{H}_n=\menge{\boldsymbol{x}\in\KKK}
{\scal{\boldsymbol{x}}{\boldsymbol{s}_n^*}\leq\eta_n}. 
\end{equation}
Then we derive from \eqref{aqYg6522e} and
Proposition~\ref{77htFq10}\ref{77htFq10ii} that 
$(\forall n\in\NN)$ $\boldsymbol{Z}\subset\boldsymbol{H}_n$. On 
the other hand, Proposition~\ref{77htFq10}\ref{77htFq10iv} 
implies that
\begin{equation}
\label{kL8u8629a}
(\forall n\in\NN)\quad\Delta_n=
\|P_{\boldsymbol{H}_n}\boldsymbol{x}_{n}-\boldsymbol{x}_n\|
\quad\text{and}\quad\boldsymbol{x}_{n+1}=\boldsymbol{x}_n+
\lambda_n(P_{\boldsymbol{H}_n}\boldsymbol{x}_n-\boldsymbol{x}_n).
\end{equation}
Thus, the conclusions follow from 
Proposition~\ref{p:uuYt6y31z}\ref{p:uuYt6y31zi}
and Proposition~\ref{p:1999}.
\end{proof}

At the $n$th iteration of algorithm \eqref{aqYg6522e}, one picks
the quadruple $(a_n,a_n^*,b_n,b_n^*)$ in $\gra A\times\gra B$.
In the following corollary, this quadruple is taken in a more 
restricted set adapted to the current primal-dual iterate 
$(x_n,v_n^*)$, which leads to more explicit convergence conditions. 

\begin{corollary}
\label{ccnw7Gjh1106}
Consider the setting of Problem~\ref{prob:1}.
Suppose that $\mathscr{P}\neq\emp$, let $\varepsilon\in\zeroun$, 
let $\alpha\in\RPP$, let $x_0\in\HH$, and let $v_0^*\in\GG$.
For every $(x,v^*)\in\KKK$, set
\begin{multline}
\label{ecnw7Gjh1105a}
\boldsymbol{G}_\alpha(x,v^*)=
\Big\{(a,b,a^*,b^*)\in\KKK\times\KKK\;\big |\;
(a,a^*)\in\gra A,\;(b,b^*)\in\gra B,\;\text{and}\\
\scal{x-a}{a^*+L^*v^*}+\scal{Lx-b}{b^*-v^*}
\geq\alpha\big(\|a^*+L^*b^*\|^2+\|La-b\|^2\big)\Big\}.
\end{multline}
Iterate
\begin{equation}
\label{ecnw7Gjh1105b}
\begin{array}{l}
\text{for}\;n=0,1,\ldots\\
\left\lfloor
\begin{array}{l}
(a_n,b_n,a_n^*,b_n^*)\in\boldsymbol{G}_\alpha(x_n,v_n^*)\\
s^*_n=a^*_n+L^*b^*_n\\
t_n=b_n-La_n\\
\tau_n=\|s_n^*\|^2+\|t_n\|^2\\
\text{if}\;\tau_n=0\\
\left\lfloor
\begin{array}{l}
\overline{x}=a_n\\
\overline{v}^*=b^*_n\\
\text{terminate}.
\end{array}
\right.\\
\text{if}\;\tau_n>0\\
\left\lfloor
\begin{array}{l}
\lambda_n\in\left[\varepsilon,2-\varepsilon\right]\\
\theta_n=\lambda_n\big(\scal{x_n}{s^*_n}+\scal{t_n}{v^*_n}
-\scal{a_n}{a^*_n}-\scal{b_n}{b^*_n}\big)/\tau_n\\
x_{n+1}=x_n-\theta_n s^*_n\\
v^*_{n+1}=v^*_n-\theta_n t_n.
\end{array}
\right.\\[8mm]
\end{array}
\right.\\[4mm]
\end{array}
\end{equation}
Then either \eqref{ecnw7Gjh1105b} terminates at a solution 
$(\overline{x},\overline{v}^*)\in\boldsymbol{Z}$ in a finite 
number of iterations or it generates infinite sequences 
$(x_n)_{n\in\NN}$ and $(v_n^*)_{n\in\NN}$ such that the following
hold:
\begin{enumerate}
\item
\label{ccnw7Gjh1106i}
$\sum_{n\in\NN}\|s^*_n\|^2<\pinf$ and 
$\sum_{n\in\NN}\|t_n\|^2<\pinf$.
\item
\label{ccnw7Gjh1106ii}
$\sum_{n\in\NN}\|x_{n+1}-x_n\|^2<\pinf$ and 
$\sum_{n\in\NN}\|v^*_{n+1}-v^*_n\|^2<\pinf$.
\item
\label{ccnw7Gjh1106iii}
Suppose that 
\begin{equation}
\label{ecnw7Gjh1106iiia}
x_n-a_n\weakly 0\quad\text{and}\quad v_n^*-b_n^*\weakly 0.
\end{equation}
Then $(x_n)_{n\in\NN}$ converges weakly to a point 
$\overline{x}\in\mathscr{P}$, $(v_n^*)_{n\in\NN}$ converges 
weakly to a point $\overline{v}^*\in\mathscr{D}$, and 
$(\overline{x},\overline{v}^*)\in\boldsymbol{Z}$.
\end{enumerate}
\end{corollary}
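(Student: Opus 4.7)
The plan is to recognize \eqref{ecnw7Gjh1105b} as a specialization of the conceptual scheme \eqref{aqYg6522e} in which the separation quantity $\Delta_n$ is controlled from below by $\sqrt{\tau_n}$, and then to chain Proposition~\ref{jkj66ybnCxZ-22} with the asymptotic principle of Proposition~\ref{H7gg5slbu515}. The pivotal algebraic identity I would establish first is
\begin{equation}
\scal{x_n}{s_n^*}+\scal{t_n}{v_n^*}-\scal{a_n}{a_n^*}-\scal{b_n}{b_n^*}
=\scal{x_n-a_n}{a_n^*+L^*v_n^*}+\scal{Lx_n-b_n}{b_n^*-v_n^*},
\end{equation}
which is obtained by expansion together with $\scal{x_n}{L^*v_n^*}=\scal{Lx_n}{v_n^*}$. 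Under $(a_n,b_n,a_n^*,b_n^*)\in\boldsymbol{G}_\alpha(x_n,v_n^*)$, the right-hand side is bounded below by $\alpha\tau_n$; in particular, the numerator of $\theta_n$ in \eqref{ecnw7Gjh1105b} is nonnegative, so the $\max$ in \eqref{aqYg6522e} is inactive, and setting $\sigma_n=\sqrt{\tau_n}$ one reads off $\Delta_n\ge\alpha\sigma_n$ in the notation of \eqref{aqYg6522e} together with the equality of the two updates. Hence Proposition~\ref{jkj66ybnCxZ-22} applies verbatim to \eqref{ecnw7Gjh1105b}.

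From Proposition~\ref{jkj66ybnCxZ-22}\ref{jkj66ybnCxZ-22ii}, $\sum_{n\in\NN}\lambda_n(2-\lambda_n)\Delta_n^2<\pinf$, and since $\lambda_n(2-\lambda_n)\ge\varepsilon(2-\varepsilon)>0$ and $\Delta_n^2\ge\alpha^2\tau_n$, this yields $\sum_{n\in\NN}\tau_n<\pinf$, giving \ref{ccnw7Gjh1106i}. Assertion \ref{ccnw7Gjh1106ii} follows from the updates by computing
\begin{equation}
\|x_{n+1}-x_n\|^2+\|v_{n+1}^*-v_n^*\|^2
=\theta_n^2\tau_n=\lambda_n^2\Delta_n^2\le(2-\varepsilon)^2\Delta_n^2,
\end{equation}
which is summable by the same token.

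For \ref{ccnw7Gjh1106iii}, Fej\'er monotonicity of $(x_n,v_n^*)_{n\in\NN}$ with respect to $\boldsymbol{Z}$ (Proposition~\ref{jkj66ybnCxZ-22}\ref{jkj66ybnCxZ-22i}) ensures boundedness, and it remains to verify the cluster-point condition \eqref{kL8u8627a} in order to invoke Proposition~\ref{jkj66ybnCxZ-22}\ref{jkj66ybnCxZ-22iii}. Let $(k_n)_{n\in\NN}$ be strictly increasing with $x_{k_n}\weakly x$ and $v_{k_n}^*\weakly v^*$. By \eqref{ecnw7Gjh1106iiia}, $a_{k_n}\weakly x$ and $b_{k_n}^*\weakly v^*$, while \ref{ccnw7Gjh1106i} implies $a_{k_n}^*+L^*b_{k_n}^*\to 0$ and $La_{k_n}-b_{k_n}\to 0$. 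These are precisely the four hypotheses of Proposition~\ref{H7gg5slbu515}, which delivers $(x,v^*)\in\boldsymbol{Z}$; the conclusion then follows from Proposition~\ref{jkj66ybnCxZ-22}\ref{jkj66ybnCxZ-22iii}.

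The main obstacle, which is the only non-automatic step, is recognizing the identity in the first paragraph and observing that it transforms the defining inequality of $\boldsymbol{G}_\alpha$ into the lower bound $\Delta_n\ge\alpha\sigma_n$; this is what upgrades the generic summability statement of Proposition~\ref{jkj66ybnCxZ-22}\ref{jkj66ybnCxZ-22ii} into the strong summability $\sum_{n\in\NN}(\|s_n^*\|^2+\|t_n\|^2)<\pinf$. Once this identification is secured, \ref{ccnw7Gjh1106i}--\ref{ccnw7Gjh1106ii} are immediate and \ref{ccnw7Gjh1106iii} is a direct match to the hypotheses of Proposition~\ref{H7gg5slbu515}.
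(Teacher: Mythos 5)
Your proposal is correct and follows essentially the same route as the paper's proof: the same key identity turns the defining inequality of $\boldsymbol{G}_\alpha$ into $\Delta_n\ge\alpha\sqrt{\tau_n}$, reducing \eqref{ecnw7Gjh1105b} to \eqref{aqYg6522e}, and \ref{ccnw7Gjh1106iii} is obtained exactly as in the paper by feeding \eqref{ecnw7Gjh1106iiia} and \ref{ccnw7Gjh1106i} into Proposition~\ref{H7gg5slbu515}. The only step the paper includes that you omit is the verification that $\boldsymbol{G}_\alpha(x_n,v_n^*)\neq\emp$, so that the iteration is well defined; this follows by taking $(a,b^*)\in\boldsymbol{Z}$ and setting $a^*=-L^*b^*$ and $b=La$, for which both sides of the inequality in \eqref{ecnw7Gjh1105a} vanish.
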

\begin{proof}
This corollary is an application of Proposition~\ref{jkj66ybnCxZ-22}.
To see this, let $(x,v^*)\in\KKK$. First, to show that the 
algorithm is well defined, we must prove that 
$\boldsymbol{G}_\alpha(x,v^*)\neq\emp$. 
Since $\mathscr{P}\neq\emp$, it follows from
Proposition~\ref{p:uuYt6y31z}\ref{p:uuYt6y31zii} that
$\boldsymbol{Z}\neq\emp$. Now let $(a,b^*)\in\boldsymbol{Z}$,
and set $a^*=-L^*b^*$ and $b=La$. Then \eqref{aqYg6521a} yields 
$(a,a^*)\in\gra A$ and $(b,b^*)\in\gra B$. Moreover,
\begin{align}
\label{kL8u8606a}
&\hskip -16mm
\scal{x-a}{a^*+L^*v^*}+\scal{Lx-b}{b^*-v^*}\nonumber\\
&=-\scal{x-a}{L^*(b^*-v^*)}+\scal{L(x-a)}{b^*-v^*}\nonumber\\
&=0\nonumber\\
&=\alpha\big(\|a^*+L^*b^*\|^2+\|La-b\|^2\big).
\end{align}
Hence $(a,b,a^*,b^*)\in\boldsymbol{G}_\alpha(x,v^*)$ and
\eqref{ecnw7Gjh1105b} is well-defined. Next, to show that 
\eqref{ecnw7Gjh1105b} is a special case of \eqref{aqYg6522e}
it is enough to consider the case when $(\forall n\in\NN)$ 
$\tau_n>0$. Note that \eqref{ecnw7Gjh1105b} yields
\begin{align}
\label{kL8u8606b}
&(\forall n\in\NN)\quad\scal{x_n}{s^*_n}+\scal{t_n}{v^*_n}
-\scal{a_n}{a^*_n}-\scal{b_n}{b^*_n}\nonumber\\
&\hskip 32mm =\scal{x_n-a_n}{a_n^*+L^*v_n^*}
+\scal{Lx_n-b_n}{b_n^*-v_n^*}\nonumber\\
&\hskip 32mm \geq\alpha\big(\|a_n^*+L^*b_n^*\|^2+\|La_n-b_n\|^2\big)
\nonumber\\
&\hskip 32mm =\alpha\tau_n\nonumber\\
&\hskip 32mm >0.
\end{align}
In turn, if we define $(\Delta_n)_{n\in\NN}$ as in 
\eqref{aqYg6522e}, we obtain
\begin{equation}
\label{kL8u8606c}
(\forall n\in\NN)\quad\Delta_n=
\frac{\scal{x_n}{s^*_n}+\scal{t_n}{v^*_n}
-\scal{a_n}{a^*_n}-\scal{b_n}{b^*_n}}{\sqrt{\tau_n}}
\geq\alpha\sqrt{\tau_n}>0.
\end{equation}
Hence \eqref{ecnw7Gjh1105b} is a special case of 
\eqref{aqYg6522e}. Moreover, it follows from 
\eqref{kL8u8606c} and 
Proposition~\ref{jkj66ybnCxZ-22}\ref{jkj66ybnCxZ-22ii} that
\begin{equation}
\label{e:france-musique}
\sum_{n\in\NN}\big(\|s_n^*\|^2+\|t_n\|^2\big)
=\sum_{n\in\NN}\tau_n
\leq\frac{1}{\alpha^2}\sum_{n\in\NN}\Delta_n^2
\leq\frac{1}{(\alpha\varepsilon)^2}
\sum_{n\in\NN}\lambda_n(2-\lambda_n)\Delta_n^2<\pinf,
\end{equation}
which establishes \ref{ccnw7Gjh1106i}. 
On the other hand, \ref{ccnw7Gjh1106ii} 
results from \eqref{ecnw7Gjh1105b} and 
\eqref{e:france-musique} since
\begin{align}
\label{ecnw7Gjh1112g}
\sum_{n\in\NN}\big(\|x_{n+1}-x_n\|^2+\|v^*_{n+1}-v^*_n\|^2\big)
&=\sum_{n\in\NN}\theta_n^2\tau_n
\nonumber\\
&=\sum_{n\in\NN}\lambda_n^2\Delta_n^2
\nonumber\\
&\leq(2-\varepsilon)^2\sum_{n\in\NN}\Delta_n^2\nonumber\\
&<\pinf.
\end{align}
Finally, to prove \ref{ccnw7Gjh1106iii}, it remains to check
\eqref{kL8u8627a}. Take $x\in\HH$, $v^*\in\GG$, and a
strictly increasing sequence $(k_n)_{n\in\NN}$ in $\NN$ 
such that $x_{k_n}\weakly x$ and $v^*_{k_n}\weakly v^*$.
Then it follows from \eqref{ecnw7Gjh1106iiia} and 
\ref{ccnw7Gjh1106i} that 
\begin{equation}
\label{PkL87hT608k}
a_{k_n}\weakly{x},\quad
b^*_{k_n}\weakly{v^*},\quad
a^*_{k_n}+L^*b^*_{k_n}\to 0,\quad\text{and}\quad
La_{k_n}-b_{k_n}\to 0,
\end{equation}
and from \eqref{ecnw7Gjh1105b} that $(\forall n\in\NN)$ 
$(a_n,a_n^*)\in\gra A$ and $(b_n,b_n^*)\in\gra B$. We therefore 
appeal to Proposition~\ref{H7gg5slbu515} to conclude that 
$(x,v^*)\in\boldsymbol{Z}$.
\end{proof}

\begin{remark}
\label{rcnw7Gjh1112}
\rm
In the special case when $\GG=\HH$ and $L=\Id$, 
Corollary~\ref{ccnw7Gjh1106}\ref{ccnw7Gjh1106iii} was established 
in \cite[Proposition~2]{Svai08} under the following additional
assumptions: $A+B$ is maximally monotone or $\HH$ is
finite-dimensional, $x_n-a_n\to 0$, and $v_n^*-b_n^*\to 0$.
\end{remark}

Corollary~\ref{ccnw7Gjh1106} is conceptual in that it does not
specify a rule for selecting the quadruple 
$(a_n,b_n,a_n^*,b_n^*)$ in $\boldsymbol{G}_\alpha(x_n,v_n^*)$
at iteration $n$. We now provide an example of a concrete 
selection rule. 

\begin{proposition}
\label{pmnmMyt^rw-19}
Consider the setting of Problem~\ref{prob:1}.
Suppose that $\mathscr{P}\neq\emp$, let $\varepsilon\in\zeroun$, 
let $x_0\in\HH$, let $v_0^*\in\GG$, and iterate
\begin{equation}
\label{emnmMyt^rw-19d}
\begin{array}{l}
\text{for}\;n=0,1,\ldots\\
\left\lfloor
\begin{array}{l}
(\gamma_n,\mu_n)\in [\varepsilon,1/\varepsilon]^2\\
a_n=J_{\gamma_n A}(x_n-\gamma_n L^*v_n^*)\\
l_n=Lx_n\\
b_n=J_{\mu_n B}(l_n+\mu_n v_n^*)\\
s^*_n=\gamma_n^{-1}(x_n-a_n)+\mu_n^{-1}L^*(l_n-b_n)\\
t_n=b_{n}-La_{n}\\
\tau_n=\|s_n^*\|^2+\|t_n\|^2\\
\text{if}\;\tau_n=0\\
\left\lfloor
\begin{array}{l}
\overline{x}=a_n\\
\overline{v}^*=v_n^*+\mu_n^{-1}(l_n-b_n)\\
\text{terminate}.
\end{array}
\right.\\
\text{if}\;\tau_n>0\\
\left\lfloor
\begin{array}{l}
\lambda_n\in\left[\varepsilon,2-\varepsilon\right]\\
\theta_n=\lambda_n\big(\gamma_n^{-1}\|x_n-a_n\|^2+\mu_n^{-1}
\|l_n-b_n\|^2\big)/\tau_n\\
x_{n+1}=x_n-\theta_n s^*_n\\
v^*_{n+1}=v^*_n-\theta_n t_n.
\end{array}
\right.\\[8mm]
\end{array}
\right.\\[4mm]
\end{array}
\end{equation}
Then either \eqref{emnmMyt^rw-19d} terminates at a solution 
$(\overline{x},\overline{v}^*)\in\boldsymbol{Z}$ in a finite 
number of iterations or it generates infinite sequences 
$(x_n)_{n\in\NN}$ and $(v_n^*)_{n\in\NN}$
such that the following hold:
\begin{enumerate}
\item
\label{pmnmMyt^rw-19i}
$\sum_{n\in\NN}\|s^*_n\|^2<\pinf$ and 
$\sum_{n\in\NN}\|t_n\|^2<\pinf$.
\item
\label{pmnmMyt^rw-19ii}
$\sum_{n\in\NN}\|x_{n+1}-x_n\|^2<\pinf$ and 
$\sum_{n\in\NN}\|v^*_{n+1}-v^*_n\|^2<\pinf$.
\item
\label{pmnmMyt^rw-19ii'}
$\sum_{n\in\NN}\|x_n-a_n\|^2<\pinf$ and 
$\sum_{n\in\NN}\|Lx_n-b_n\|^2<\pinf$.
\item
\label{pmnmMyt^rw-19iii}
$(x_n)_{n\in\NN}$ converges weakly to a point 
$\overline{x}\in\mathscr{P}$, $(v_n^*)_{n\in\NN}$ converges 
weakly to a point $\overline{v}^*\in\mathscr{D}$, and 
$(\overline{x},\overline{v}^*)\in\boldsymbol{Z}$.
\end{enumerate}
\end{proposition}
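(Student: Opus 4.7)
The plan is to identify \eqref{emnmMyt^rw-19d} as a concrete instance of the conceptual scheme \eqref{ecnw7Gjh1105b} from Corollary~\ref{ccnw7Gjh1106}, and then read off items~\ref{pmnmMyt^rw-19i}--\ref{pmnmMyt^rw-19iii} from that corollary together with Proposition~\ref{77htFq10}\ref{77htFq10i} for the termination case.

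First, I would translate the two resolvent steps into graph points. The identity $a_n=J_{\gamma_n A}(x_n-\gamma_n L^*v_n^*)$ is equivalent to $a_n^*:=\gamma_n^{-1}(x_n-a_n)-L^*v_n^*\in Aa_n$, and $b_n=J_{\mu_n B}(l_n+\mu_n v_n^*)$ is equivalent to $b_n^*:=v_n^*+\mu_n^{-1}(l_n-b_n)\in Bb_n$. Direct substitution then shows that these yield exactly the quantities $s_n^*=a_n^*+L^*b_n^*$ and $t_n=b_n-La_n$ of the corollary, and moreover that
\[
\scal{x_n-a_n}{a_n^*+L^*v_n^*}+\scal{Lx_n-b_n}{b_n^*-v_n^*}=\gamma_n^{-1}\|x_n-a_n\|^2+\mu_n^{-1}\|l_n-b_n\|^2,
\]
which matches the numerator of $\theta_n$ in \eqref{ecnw7Gjh1105b}. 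The termination branch is then immediate: when $\tau_n=0$, both $s_n^*=0$ and $t_n=0$ hold, so Proposition~\ref{77htFq10}\ref{77htFq10i} forces $(a_n,b_n^*)\in\boldsymbol{Z}$, and $b_n^*=v_n^*+\mu_n^{-1}(l_n-b_n)$ is precisely the $\overline{v}^*$ returned by \eqref{emnmMyt^rw-19d}.

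The key step, and the main obstacle, is to show that $(a_n,b_n,a_n^*,b_n^*)\in\boldsymbol{G}_\alpha(x_n,v_n^*)$ for some $\alpha>0$ \emph{independent of $n$}, since only then does the corollary apply. Writing $s_n^*=\gamma_n^{-1}(x_n-a_n)+\mu_n^{-1}L^*(l_n-b_n)$ and $t_n=L(x_n-a_n)-(l_n-b_n)$ and applying a crude $\|u+v\|^2\leq 2\|u\|^2+2\|v\|^2$ bound gives
\[
\tau_n\leq 2\bigl(\gamma_n^{-2}+\|L\|^2\bigr)\|x_n-a_n\|^2+2\bigl(\mu_n^{-2}\|L\|^2+1\bigr)\|l_n-b_n\|^2.
\]
Using $\gamma_n,\mu_n\in[\varepsilon,\varepsilon^{-1}]$, the coefficients are controlled by a constant depending only on $\varepsilon$ and $\|L\|$, while $\gamma_n^{-1},\mu_n^{-1}\geq\varepsilon$, so an $\alpha=\alpha(\varepsilon,\|L\|)>0$ exists with $\alpha\tau_n\leq\gamma_n^{-1}\|x_n-a_n\|^2+\mu_n^{-1}\|l_n-b_n\|^2$. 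This is the $\boldsymbol{G}_\alpha$ inequality. The conceptual subtlety is that $\|L\|$ appears here in the analysis, yet the algorithm itself never uses it—this is precisely the point of the method.

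Once \eqref{emnmMyt^rw-19d} is recognized as a special case of \eqref{ecnw7Gjh1105b}, items~\ref{pmnmMyt^rw-19i} and~\ref{pmnmMyt^rw-19ii} follow verbatim from Corollary~\ref{ccnw7Gjh1106}\ref{ccnw7Gjh1106i}--\ref{ccnw7Gjh1106ii}. For~\ref{pmnmMyt^rw-19ii'}, I would recycle the inequality \eqref{kL8u8606c}, which in the present setting gives $\Delta_n^2\geq\alpha\bigl(\gamma_n^{-1}\|x_n-a_n\|^2+\mu_n^{-1}\|l_n-b_n\|^2\bigr)$; summability of $\Delta_n^2$ from Proposition~\ref{jkj66ybnCxZ-22}\ref{jkj66ybnCxZ-22ii} together with $\gamma_n^{-1},\mu_n^{-1}\geq\varepsilon$ then yields $\sum_n(\|x_n-a_n\|^2+\|l_n-b_n\|^2)<\pinf$. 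In particular $x_n-a_n\to 0$ strongly, and since $v_n^*-b_n^*=-\mu_n^{-1}(l_n-b_n)$ with $\mu_n^{-1}\leq\varepsilon^{-1}$, also $v_n^*-b_n^*\to 0$ strongly. These strong limits verify hypothesis \eqref{ecnw7Gjh1106iiia} of Corollary~\ref{ccnw7Gjh1106}\ref{ccnw7Gjh1106iii}, delivering~\ref{pmnmMyt^rw-19iii}.
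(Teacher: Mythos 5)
Your proposal is correct, and its overall skeleton is the same as the paper's: translate the resolvent steps into graph points $(a_n,a_n^*)\in\gra A$, $(b_n,b_n^*)\in\gra B$, verify membership in $\boldsymbol{G}_\alpha(x_n,v_n^*)$ for an $n$-independent $\alpha>0$, and invoke Corollary~\ref{ccnw7Gjh1106}. You diverge from the paper in two sub-steps, both in the direction of simplicity. First, for the $\boldsymbol{G}_\alpha$ inequality the paper performs an exact expansion of $\|a_n^*+L^*b_n^*\|^2+\|La_n-b_n\|^2$ (equations \eqref{emnmMyt^rw-19g}--\eqref{emnmMyt^rw-19j}) before bounding, arriving at $\alpha=\varepsilon/(1+\|L\|^2+2(1-\varepsilon^2)\max\{1,\|L\|^2\})$; your crude $\|u+v\|^2\leq 2\|u\|^2+2\|v\|^2$ estimate reaches an equally serviceable $\alpha(\varepsilon,\|L\|)$ with far less computation, and correctly isolates the conceptual point that $\|L\|$ enters only the analysis, never the iteration. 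Second, and more substantively, for item~\ref{pmnmMyt^rw-19ii'} the paper inverts $\gamma_n^{-1}\Id+\mu_n^{-1}L^*L$ to express $x_n-a_n$ in terms of $s_n^*$ and $t_n$ (equation \eqref{emnmMyt^rw-19t}) and then pulls summability back through $\|(\Id+(\gamma_n/\mu_n)L^*L)^{-1}\|\leq 1$; you instead observe that $\Delta_n=N_n/\sqrt{\tau_n}$ with $N_n=\gamma_n^{-1}\|x_n-a_n\|^2+\mu_n^{-1}\|l_n-b_n\|^2$ and $N_n\geq\alpha\tau_n$, whence $\Delta_n^2\geq\alpha N_n\geq\alpha\varepsilon(\|x_n-a_n\|^2+\|l_n-b_n\|^2)$, and summability of $\Delta_n^2$ (already secured in the proof of Corollary~\ref{ccnw7Gjh1106}, cf.\ \eqref{e:france-musique}) finishes the job. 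This bypasses the operator inversion entirely and gives both sums of \ref{pmnmMyt^rw-19ii'} at once; the closing observation that $v_n^*-b_n^*=-\mu_n^{-1}(l_n-b_n)$ then verifies \eqref{ecnw7Gjh1106iiia} exactly as in the paper. Both routes are sound; yours trades the paper's explicit structural identity for a shorter energy-type argument.
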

\begin{proof}
We are going to derive the results from 
Corollary~\ref{ccnw7Gjh1106}. To this end, let us set 
\begin{equation}
\label{emnmMyt^rw-19e}
(\forall n\in\NN)\quad
a^*_n=\gamma_n^{-1}(x_n-a_n)-L^*v_n^*\quad\text{and}\quad
b^*_n=\mu_n^{-1}(Lx_n-b_n)+v_n^*.
\end{equation}
Now let $n\in\NN$. To show that \eqref{emnmMyt^rw-19d} is 
an instantiation of \eqref{ecnw7Gjh1105b}, let us check that 
there exists $\alpha\in\RPP$ such that
$(a_n,b_n,a_n^*,b_n^*)\in\boldsymbol{G}_\alpha(x_n,v_n^*)$.
By construction, we have 
\begin{equation}
\label{emnmMyt^rw-19s}
(a_n,a_n^*)\in\gra A
\quad\text{and}\quad
(b_n,b_n^*)\in\gra B. 
\end{equation}
In view of \eqref{ecnw7Gjh1105a}, we must find $\alpha\in\RPP$ 
such that 
\begin{equation}
\label{emnmMyt^rw-19f}
\scal{x_n-a_n}{a_n^*+L^*v_n^*}+\scal{Lx_n-b_n}{b_n^*-v_n^*}\geq
\alpha\big(\|a^*_n+L^*b^*_n\|^2+\|La_n-b_n\|^2\big).
\end{equation}
By \eqref{emnmMyt^rw-19e}, 
\begin{equation}
\label{emnmMyt^rw-19m}
\scal{x_n-a_n}{a_n^*+L^*v_n^*}+\scal{Lx_n-b_n}{b_n^*-v_n^*}
=\gamma_n^{-1}\|x_n-a_n\|^2+\mu_n^{-1}\|Lx_n-b_n\|^2
\end{equation}
and 
\begin{multline}
\label{emnmMyt^rw-19g}
\|a^*_n+L^*b^*_n\|^2+\|La_n-b_n\|^2\\
=\|(\gamma_n^{-1}\Id+\mu_n^{-1}L^*L)x_n-
(\gamma_n^{-1}a_n+\mu_n^{-1}L^*b_n)\|^2+\|La_n-b_n\|^2.
\end{multline}
On the other hand, 
\begin{equation}
\label{emnmMyt^rw-19h}
\|La_n-b_n\|^2=\|La_n\|^2-2\scal{La_n}{b_n}+\|b_n\|^2
\end{equation}
and
\begin{align}
\label{emnmMyt^rw-19i}
&\hskip -8mm
\|(\gamma_n^{-1}\Id+\mu_n^{-1}L^*L)x_n-
(\gamma_n^{-1}a_n+\mu_n^{-1}L^*b_n)\|^2\nonumber\\
&=\|\gamma_n^{-1}x_n+\mu_n^{-1}L^*Lx_n\|^2-
2\scal{\gamma_n^{-1}x_n+\mu_n^{-1}L^*Lx_n}
{\gamma_n^{-1}a_n+\mu_n^{-1}L^*b_n}
\nonumber\\
&\quad\;+\|\gamma_n^{-1}a_n+\mu_n^{-1}L^*b_n\|^2\nonumber\\
&=\gamma_n^{-2}\|x_n\|^2+2\gamma_n^{-1}\mu_n^{-1}\|Lx_n\|^2+
\mu_n^{-2}\|L^*Lx_n\|^2
-2\gamma_n^{-2}\scal{x_n}{a_n}\nonumber\\
&\quad\;
-2\gamma_n^{-1}\mu_n^{-1}\scal{Lx_n}{b_n}-
2\gamma_n^{-1}\mu_n^{-1}\scal{Lx_n}{La_n}
-2\mu_n^{-2}\scal{L^*Lx_n}{L^*b_n}\nonumber\\
&\quad\;+\gamma_n^{-2}\|a_n\|^2+
2\gamma_n^{-1}\mu_n^{-1}\scal{La_n}{b_n}
+\mu_n^{-2}\|L^*b_n\|^2\nonumber\\
&=\gamma_n^{-2}\|x_n-a_n\|^2+\mu_n^{-2}\|L^*(Lx_n-b_n)\|^2+
2\gamma_n^{-1}\mu_n^{-1}\|Lx_n\|^2\nonumber\\
&\quad\;-2\gamma_n^{-1}\mu_n^{-1}\scal{Lx_n}{b_n}-
2\gamma_n^{-1}\mu_n^{-1}\scal{Lx_n}{La_n}
+2\gamma_n^{-1}\mu_n^{-1}\scal{La_n}{b_n}\nonumber\\
&=\gamma_n^{-2}\|x_n-a_n\|^2+\mu_n^{-2}\|L^*(Lx_n-b_n)\|^2
+\gamma_n^{-1}\mu_n^{-1}\|L(x_n-a_n)\|^2\nonumber\\
&\quad\;-\gamma_n^{-1}\mu_n^{-1}\|La_n\|^2+
\gamma_n^{-1}\mu_n^{-1}\|Lx_n-b_n\|^2-\gamma_n^{-1}\mu_n^{-1}
\|b_n\|^2\nonumber\\
&\quad\;+2\gamma_n^{-1}\mu_n^{-1}\scal{La_n}{b_n}.
\end{align}
Combining \eqref{emnmMyt^rw-19g}, 
\eqref{emnmMyt^rw-19h}, and \eqref{emnmMyt^rw-19i},
and recalling that 
$\{\gamma_n,\mu_n\}\subset[\varepsilon,\varepsilon^{-1}]$, we obtain
\begin{align}
\label{emnmMyt^rw-19j}
&\hskip -8mm
\|a^*_n+L^*b^*_n\|^2+\|La_n-b_n\|^2\nonumber\\
&=\gamma_n^{-2}\|x_n-a_n\|^2+\mu_n^{-2}\|L^*(Lx_n-b_n)\|^2
\nonumber\\
&\quad\;+\gamma_n^{-1}\mu_n^{-1}\|L(x_n-a_n)\|^2
+\gamma_n^{-1}\mu_n^{-1}\|Lx_n-b_n\|^2\nonumber\\
&\quad\;+\big(1-\gamma_n^{-1}\mu_n^{-1}\big)\big(\|La_n\|^2
-2\scal{La_n}{b_n}+\|b_n\|^2\big)\nonumber\\
&=\gamma_n^{-2}\|x_n-a_n\|^2+\mu_n^{-2}\|L^*(Lx_n-b_n)\|^2
+\gamma_n^{-1}\mu_n^{-1}\|L(x_n-a_n)\|^2\nonumber\\
&\quad\;+\gamma_n^{-1}\mu_n^{-1}\|Lx_n-b_n\|^2
+\big(1-\gamma_n^{-1}\mu_n^{-1}\big)\|La_n-b_n\|^2\nonumber\\
&\leq\varepsilon^{-1}\Big(\gamma_n^{-1}\|x_n-a_n\|^2
+\mu_n^{-1}\|L^*(Lx_n-b_n)\|^2
+\gamma_n^{-1}\|L(x_n-a_n)\|^2\nonumber\\
&\quad\;+\mu_n^{-1}\|Lx_n-b_n\|^2\Big)
+2\big(1-\gamma_n^{-1}\mu_n^{-1}\big)\big(\|L(a_n-x_n)\|^2
+\|Lx_n-b_n\|^2\big)\nonumber\\
&\leq\varepsilon^{-1}\big(1+\|L\|^2\big)
\Big(\gamma_n^{-1}\|x_n-a_n\|^2+\mu_n^{-1}\|Lx_n-b_n\|^2\Big)
\nonumber\\
&\quad\;+2\big(\gamma_n-\mu_n^{-1}\big)
\gamma_n^{-1}\|L\|^2\,\|x_n-a_n\|^2+2\big(\mu_n-\gamma_n^{-1}\big)
\mu_n^{-1}\|Lx_n-b_n\|^2\nonumber\\
&\leq\varepsilon^{-1}\big(1+\|L\|^2+2(1-\varepsilon^{2})
\text{max}\big\{1,\|L\|^2\big\}\big)\nonumber\\
&\quad\hskip 44mm\times\big(\gamma_n^{-1}\|x_n-a_n\|^2
+\mu_n^{-1}\|Lx_n-b_n\|^2\big).
\end{align}
Therefore, \eqref{emnmMyt^rw-19m} implies that
\eqref{emnmMyt^rw-19f} is satisfied with 
\begin{equation}
\alpha=\frac{\varepsilon}{1+\|L\|^2+2(1-\varepsilon^{2})
\text{max}\big\{1,\|L\|^2\big\}}.
\end{equation}
We thus obtain \ref{pmnmMyt^rw-19i} and 
\ref{pmnmMyt^rw-19ii}.
To prove \ref{pmnmMyt^rw-19ii'}, note that it follows from
\eqref{emnmMyt^rw-19e} that
\begin{align}
\label{emnmMyt^rw-19t}
x_n-a_n
&=(\gamma_n^{-1}\Id+\mu_n^{-1}L^*L)^{-1}
\big(\gamma_n^{-1}(x_n-a_n)+\mu_n^{-1}L^*L(x_n-a_n)\big)\nonumber\\
&=(\gamma_n^{-1}\Id+\mu_n^{-1}L^*L)^{-1}
\big(\gamma_n^{-1}(x_n-a_n)+\mu_n^{-1}L^*(Lx_n-b_n)\nonumber\\
&\quad\hskip 34mm+\mu_n^{-1}L^*(b_n-La_n)\big)\nonumber\\
&=\gamma_n(\Id+(\gamma_n/\mu_n)L^*L)^{-1}\big((a^*_n+L^*b^*_n)
+\mu_n^{-1}L^*(b_n-La_n)\big).
\end{align}
Thus, since $\|(\Id+(\gamma_n/\mu_n)L^*L)^{-1}\|\leq 1$ and 
since $\text{max}\{\gamma_n,\mu_n^{-1}\}\leq\varepsilon^{-1}$,
we have
\begin{align}
\label{PkL87hT629t}
\|x_n-a_n\|^2
&\leq\gamma_n^2\big\|(\Id+(\gamma_n/\mu_n)L^*L)^{-1}\big\|^2\,
\big(\|a^*_n+L^*b^*_n\|+\mu_n^{-1}\|L^*(La_n-b_n)\|\big)^2
\nonumber\\
&\leq 2\gamma_n^2\big(\|a^*_n+L^*b^*_n\|^2+
\mu_n^{-2}\|L^*(La_n-b_n)\|^2\big)\nonumber\\
&\leq 2\varepsilon^{-2}\big(\|s^*_n\|^2+\varepsilon^{-2}
\|L\|^2\,\|t_n\|^2\big).
\end{align}
Hence, \ref{pmnmMyt^rw-19i} yields 
$\sum_{n\in\NN}\|x_n-a_n\|^2<\pinf$.
In turn, since
\begin{equation}
\label{PkL87hT608g}
\|Lx_n-b_n\|^2=\|L(x_n-a_n)+La_n-b_n\|^2\leq 
2\big(\|L\|^2\,\|x_n-a_n\|^2+\|t_n\|^2\big),
\end{equation}
we obtain $\sum_{n\in\NN}\|Lx_n-b_n\|^2<\pinf$.
Therefore 
\begin{equation}
\sum_{n\in\NN}\|v_n^*-b_n^*\|^2
=\sum_{n\in\NN}\mu_n^{-2}\|Lx_n-b_n\|^2
\leq\varepsilon^{-2}\sum_{n\in\NN}\|Lx_n-b_n\|^2<\pinf.
\end{equation}
Thus, \eqref{ecnw7Gjh1106iiia} is satisfied and 
\ref{pmnmMyt^rw-19iii} follows.
\end{proof}

\begin{remark} 
\label{r:1}
\rm
As mentioned in the Introduction, existing methods for solving 
Problem~\ref{prob:1} either require knowledge of $\|L\|$ or
necessitate potentially hard to implement inversions of linear 
operators. For instance, the method of \cite{Siop11}, which hinges 
on a reformulation that employs Tseng's forward-backward-forward 
algorithm \cite{Tsen00}, imposes the same scaling coefficients on 
$A$ and $B$ at each iteration and they must be bounded by a 
specific constant which depends on $\|L\|$; more precisely, 
$(\forall n\in\NN)$ $\gamma_n=\mu_n\in\left]0,1/\|L\|\right[$. 
These restrictions are lifted in \eqref{emnmMyt^rw-19d},
where the parameters $(\gamma_n)_{n\in\NN}$ and 
$(\mu_n)_{n\in\NN}$ can evolve freely in an
arbitrarily large interval of $\RPP$ independent from $L$. 
\end{remark}

We now highlight two particular instances of interest.

\begin{example}
\label{excnw7Gjh1108}
\rm
Consider the setting of Problem~\ref{prob:1} with $A=0$.
Then the primal problem \eqref{e:primal} reduces to 
\begin{equation}
\label{ecnw7Gjh1109p}
\text{find}\;\;\overline{x}\in\HH\;\;\text{such that}\;\; 
0\in L^*BL\overline{x}.
\end{equation}
Assume that it has at least one solution,
let $\lambda\in\left]0,2\right[$, let $x_0\in\HH$, 
let $v_0^*\in\GG$, and iterate
\begin{equation}
\label{ecnw7Gjh1108}
\begin{array}{l}
\text{for}\;n=0,1,\ldots\\
\left\lfloor
\begin{array}{l}
a_n=x_n-L^*v_n^*\\
l_n=Lx_n\\
b_n=J_{B}(l_n+v_n^*)\\
s^*_n=x_n-a_n+L^*(l_n-b_n)\\
t_n=b_n-La_n\\
\tau_n=\|s_n^*\|^2+\|t_n\|^2\\
\text{if}\;\tau_n=0\\
\left\lfloor
\begin{array}{l}
\overline{x}=a_n\\
\overline{v}^*=v_n^*+l_n-b_n\\
\text{terminate}.
\end{array}
\right.\\
\text{if}\;\tau_n>0\\
\left\lfloor
\begin{array}{l}
\theta_n=\lambda\big(\|x_n-a_n\|^2+\|l_n-b_n\|^2\big)/\tau_n\\
x_{n+1}=x_n-\theta_n s^*_n\\
v^*_{n+1}=v^*_n-\theta_n t_n.
\end{array}
\right.\\[8mm]
\end{array}
\right.\\[4mm]
\end{array}
\end{equation}
This algorithm is the instance of \eqref{emnmMyt^rw-19d} 
in which $A=0$ and $(\forall n\in\NN)$ $\gamma_n=\mu_n=1$ and 
$\lambda_n=\lambda$. It follows from 
Proposition~\ref{pmnmMyt^rw-19} that, if it does not 
terminate, it produces a sequence $(x_n)_{n\in\NN}$ that converges 
weakly to a solution to \eqref{ecnw7Gjh1109p}. 
In the special case when $\HH=\RR^N$ and 
$\GG=\RR^M$ this result was established in \cite{Dong05} (the 
fact that \cite[Algorithm~3.1]{Dong05} is equivalent to 
\eqref{ecnw7Gjh1108} follows from elementary manipulations). 
Interestingly, the analysis
of \cite{Dong05} is quite different from ours and it does not
employ a geometric construction. 
\end{example}

\begin{example}
\label{excnw7Gjh1101}
\rm
Consider the setting of Problem~\ref{prob:1} with $\GG=\HH$
and $L=\Id$. Then the primal problem \eqref{e:primal} reduces to 
\begin{equation}
\label{ecnw7Gjh1109a}
\text{find}\;\;\overline{x}\in\HH\;\;\text{such that}\;\; 
0\in A\overline{x}+B\overline{x}.
\end{equation}
Assume that it has at least one solution, let 
$\varepsilon\in\zeroun$, let $x_0\in\HH$, let $v_0^*\in\HH$, 
and iterate
\begin{equation}
\label{emnmMyt^rw-19d'}
\begin{array}{l}
\text{for}\;n=0,1,\ldots\\
\left\lfloor
\begin{array}{l}
(\gamma_n,\mu_n)\in [\varepsilon,1/\varepsilon]^2\\
a_n=J_{\gamma_n A}(x_n-\gamma_n v_n^*)\\
b_n=J_{\mu_n B}(x_n+\mu_n v_n^*)\\
s^*_n=\gamma_n^{-1}(x_n-a_n)+\mu_n^{-1}(x_n-b_n)\\
t_n=b_n-a_n\\
\tau_n=\|s_n^*\|^2+\|t_n\|^2\\
\text{if}\;\tau_n=0\\
\left\lfloor
\begin{array}{l}
\overline{x}=a_n\\
\overline{v}^*=v_n^*+\mu_n^{-1}(x_n-b_n)\\
\text{terminate}.
\end{array}
\right.\\
\text{if}\;\tau_n>0\\
\left\lfloor
\begin{array}{l}
\lambda_n\in\left[\varepsilon,2-\varepsilon\right]\\
\theta_n=\lambda_n\big(\gamma_n^{-1}\|x_n-a_n\|^2+\mu_n^{-1}
\|x_n-b_n\|^2\big)/\tau_n\\
x_{n+1}=x_n-\theta_n s^*_n\\
v^*_{n+1}=v^*_n-\theta_n t_n.
\end{array}
\right.\\[8mm]
\end{array}
\right.\\[4mm]
\end{array}
\end{equation}
Then Proposition~\ref{pmnmMyt^rw-19} asserts that, if the 
algorithm does not terminate, it produces a sequence 
$(x_n,v^*_n)_{n\in\NN}$ that converges weakly to a 
point $(\overline{x},\overline{v}^*)$ such that 
$-\overline{v}^*\in A\overline{x}$ and
$\overline{v}^*\in B\overline{x}$, so that $\overline{x}$ solves 
\eqref{ecnw7Gjh1109a}. Under the additional assumptions that 
$A+B$ is maximally monotone or that $\HH$ is finite-dimensional,
this result was established in \cite[Proposition~3]{Svai08} for a 
version of \eqref{emnmMyt^rw-19d'} in which an additional 
relaxation parameter is allowed in the definition of $a_n$. 
\end{example}

\section{A Fej\'er monotone algorithm for coupled monotone 
inclusions}
\label{sec:4}

Many complex systems feature interactions between several 
variables can be modeled in terms of equilibria involving 
composite monotone operators. A mathematical formulation of such
problems in duality is the following.

\begin{problem}
\label{prob:2}
\rm
Let $m$ and $K$ be strictly positive integers, let
$(\HH_i)_{1\leq i\leq m}$ and $(\GG_k)_{1\leq k\leq K}$ be real 
Hilbert spaces, and set 
$\KKK=\HH_1\oplus\cdots\HH_m\oplus\GG_1\oplus\cdots\oplus\GG_K$.
For every $i\in\{1,\ldots,m\}$ and $k\in\{1,\ldots,K\}$, let 
$A_i\colon\HH_i\to 2^{\HH_i}$ and $B_k\colon\GG_k\to 2^{\GG_k}$ 
be maximally monotone, let $z_i\in\HH_i$, let $r_k\in\GG_k$, 
and let $L_{ki}\colon\HH_i\to\GG_k$ be linear and bounded. 
Consider the coupled inclusions problem
\begin{multline}
\label{N09Kd424p}
\text{find}\;\;\overline{x}_1\in\HH_1,\ldots,\overline{x}_m\in\HH_m
\;\;\text{such that}\\
(\forall i\in\{1,\ldots,m\})\quad
z_i\in A_i\overline{x}_i+\Sum_{k=1}^KL_{ki}^*
\bigg(B_k\bigg(\Sum_{j=1}^mL_{kj}\overline{x}_j-r_k\bigg)\bigg),
\end{multline}
the dual problem 
\begin{multline}
\label{N09Kd424d}
\text{find}\;\;\overline{v}_1^*\in\GG_1,\ldots,\overline{v}^*_K
\in\GG_K
\;\;\text{such that}\\
(\forall k\in\{1,\ldots,K\})\quad
-r_k\in-\Sum_{i=1}^mL_{ki}\bigg(A_i^{-1}
\bigg(z_i-\Sum_{l=1}^KL_{li}^*\overline{v}^*_l\bigg)\bigg)
+B_k^{-1}\overline{v}^*_k,
\end{multline}
and the associated Kuhn-Tucker set
\begin{multline}
\label{ecnw7Gjh1107k}
\boldsymbol{Z}=\bigg\{(x_1,\ldots,x_m,v_1^*,\ldots,v^*_K)\in\KKK
\;\bigg |\;
(\forall i\in\{1,\ldots,m\})\;\;z_i-\sum_{k=1}^KL_{ki}^*v_k^*\in
A_ix_i\\
\text{and}\:\;(\forall k\in\{1,\ldots,K\})\;\;
\sum_{i=1}^mL_{ki}x_i-r_k\in B_k^{-1}v_k^*\bigg\}.
\end{multline}
The problem is to find a point in $\boldsymbol{Z}$.
The sets of solutions to \eqref{N09Kd424p} and 
\eqref{N09Kd424d} are denoted by $\mathscr{P}$ and 
$\mathscr{D}$, respectively.
\end{problem}

Such formulations, at least in their primal form 
\eqref{N09Kd424d}, have been investigated at various 
levels of generality in  
\cite{Sico10,Atto11,Bach12,Reic05,Bot13b,Nmtm09,Bric13,Cohe87,%
Jmiv11,Siop13,Fran12,Juan12} to model problems arising in 
areas such as 
game theory, evolution equations, machine learning, signal and 
image processing, mechanics, the cognitive sciences, 
and domain decomposition methods in partial differential equations.
As shown in \cite{Beck14} and \cite[Section~3]{Siop13}, 
another important motivation for 
studying such systems is the fact that single-variable inclusion
problems involving various types of parallel sums of monotone
operators, can be recast in the multivariate format 
\eqref{N09Kd424p} via the introduction of auxiliary 
variables. 

In this section, we shall use the following result, which 
establishes a bridge between Problem~\ref{prob:1} and 
Problem~\ref{prob:2}, to devise a splitting method for the 
latter based on Proposition~\ref{pmnmMyt^rw-19}.

\begin{proposition}
\label{pcnw7Gjh1110}
Consider the setting of Problem~\ref{prob:2} and set
\begin{equation}
\label{ecnw7Gjh1110a}
\begin{cases}
\HH=\bigoplus_{i=1}^m\HH_i\\ 
\GG=\bigoplus_{k=1}^K\GG_k\\ 
A\colon\HH\to 2^{\HH}\colon (x_i)_{1\leq i\leq m}\mapsto
\cart_{\!i=1}^{\!m}(-z_i+A_ix_i)\\
B\colon\GG\to 2^{\GG}\colon
(y_k)_{1\leq k\leq K}\mapsto\cart_{\!k=1}^{\!K}B_k(y_k-r_k)\\
L\colon\HH\to\GG\colon (x_i)_{1\leq i\leq m}\mapsto 
\big(\sum_{i=1}^mL_{ki}x_i\big)_{1\leq k\leq K}
\end{cases}
\end{equation}
in Problem~\ref{prob:1}. Then the following hold:
\begin{enumerate}
\item
\label{pcnw7Gjh1110i}
Problem~\ref{prob:1} coincides with Problem~\ref{prob:2}.
\item
\label{pcnw7Gjh1110ii}
Let $\gamma\in\RPP$, let $(x_i)_{1\leq i\leq m}\in\HH$, and 
let $(y_k)_{1\leq k\leq K}\in\GG$. Then 
\begin{multline}
J_{\gamma A}(x_i)_{1\leq i\leq m}=
\big(J_{\gamma A_i}(x_i+\gamma z_i)\big)_{1\leq i\leq m}\\
\text{and}\quad
J_{\gamma B}(y_k)_{1\leq k\leq K}=
\big(r_k+J_{\gamma B_k}(y_k-r_k)\big)_{1\leq k\leq K}.
\end{multline}
\end{enumerate}
\end{proposition}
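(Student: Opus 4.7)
The plan is to verify both assertions by direct calculation, unpacking the product/shift structure encoded in \eqref{ecnw7Gjh1110a}. Before doing so, I would record two preliminary facts needed in every step: first, that the operators $A$ and $B$ as defined are maximally monotone (since translation by $-z_i$ or by $-r_k$ preserves maximal monotonicity, and the Cartesian product of maximally monotone operators on the Hilbert direct sum is maximally monotone, by standard results from \cite{Livre1}, so that Problem~\ref{prob:1} is indeed well-posed in this setting); and second, that the adjoint of $L$ is
\begin{equation}
L^*\colon\GG\to\HH\colon(v^*_k)_{1\leq k\leq K}\mapsto
\bigg(\sum_{k=1}^KL_{ki}^*v^*_k\bigg)_{1\leq i\leq m},
\end{equation}
which follows from the definition of $L$ and the scalar product on the Hilbert direct sums.

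For \ref{pcnw7Gjh1110i}, I would unfold the primal inclusion $0\in A\overline{x}+L^*BL\overline{x}$ component-wise. Since $A\overline{x}=\cart_{i=1}^{m}(-z_i+A_i\overline{x}_i)$ and $(BL\overline{x})_k=B_k(\sum_j L_{kj}\overline{x}_j-r_k)$, applying $L^*$ to the latter and equating the $i$-th coordinate of the resulting inclusion with $0$ produces exactly the $i$-th line of \eqref{N09Kd424p}. For the dual, I would compute $A^{-1}\colon u\mapsto\cart_i A_i^{-1}(u_i+z_i)$ and $B^{-1}\colon v\mapsto\cart_k(r_k+B_k^{-1}v_k)$ by inverting the shifted product operators, then expand $-LA^{-1}(-L^*\overline{v}^*)+B^{-1}\overline{v}^*$; the $k$-th coordinate yields precisely the $k$-th line of \eqref{N09Kd424d}. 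The Kuhn-Tucker set of Problem~\ref{prob:1} under \eqref{ecnw7Gjh1110a} is $\{(x,v^*)\mid -L^*v^*\in Ax,\;Lx\in B^{-1}v^*\}$, and expanding these two inclusions coordinate-wise yields exactly \eqref{ecnw7Gjh1107k}.

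For \ref{pcnw7Gjh1110ii}, I would use the definition $J_{\gamma A}=(\Id+\gamma A)^{-1}$. Writing $y=J_{\gamma A}(x_i)_{1\leq i\leq m}$, the relation $x\in y+\gamma Ay$ decouples across $i$ into $x_i+\gamma z_i\in y_i+\gamma A_iy_i$, whence $y_i=J_{\gamma A_i}(x_i+\gamma z_i)$. For $B$, the shift is inside the operator, so writing $u=J_{\gamma B}(y_k)_{1\leq k\leq K}$ and substituting $w_k=u_k-r_k$, the coupled relation $y_k\in u_k+\gamma B_k(u_k-r_k)$ turns into $y_k-r_k\in w_k+\gamma B_kw_k$, yielding $u_k=r_k+J_{\gamma B_k}(y_k-r_k)$.

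The only genuine obstacle is bookkeeping: keeping the two indices $i$ and $k$, the two shifts $z_i$ and $r_k$, and the signs in the dual computation all consistent. There is no conceptual difficulty, since the maximal monotonicity of $A$ and $B$ is inherited from $(A_i)$ and $(B_k)$ via translation and product arguments already available in \cite{Livre1}, and the resolvent computation is just the separable decoupling of an inclusion over a Cartesian product together with an affine change of variable for the inner shift $-r_k$.
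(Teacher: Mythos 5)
Your proposal is correct and follows essentially the same route as the paper: part \ref{pcnw7Gjh1110i} is obtained by computing $L^*$ and unfolding the definitions coordinate-wise, and part \ref{pcnw7Gjh1110ii} is exactly the translation/product resolvent identities that the paper simply cites from \cite[Propositions~23.15 and 23.16]{Livre1}. Your explicit derivations (the affine change of variable $w_k=u_k-r_k$ and the coordinate-wise decoupling) are the standard arguments behind those citations, so nothing is missing.
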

\begin{proof}
\ref{pcnw7Gjh1110i}: This follows from \eqref{ecnw7Gjh1110a}
and the fact that $L^*\colon\GG\to\HH\colon (y_k)_{1\leq k\leq K}
\mapsto(\sum_{k=1}^KL_{ki}^*y_k)_{1\leq i\leq m}$.

\ref{pcnw7Gjh1110ii}: \cite[Propositions~23.15 and 23.16]{Livre1}.
\end{proof}

To find a Kuhn-Tucker point in Problem~\ref{prob:2} we can invoke 
Proposition~\ref{pcnw7Gjh1110} and apply the algorithms devised in 
Section~\ref{sec:3} in the setting of \eqref{ecnw7Gjh1110a}. Thus, 
Proposition~\ref{pmnmMyt^rw-19} leads to the following
result. 

\begin{theorem}
\label{tcnw7Gjh1107}
Consider the setting of Problem~\ref{prob:2}. Suppose that 
$\mathscr{P}\neq\emp$, let $\varepsilon\in\zeroun$, let 
$x_{1,0}\in\HH_1$, \ldots, $x_{m,0}\in\HH_m$, $v^*_{1,0}\in\GG_1$, 
\ldots, $v^*_{K,0}\in\GG_K$, and iterate
\begin{equation}
\label{ecnw7Gjh1107a}
\begin{array}{l}
\text{for}\;n=0,1,\ldots\\
\left\lfloor
\begin{array}{l}
(\gamma_n,\mu_n)\in [\varepsilon,1/\varepsilon]^2\\
\text{for}\;i=1,\ldots,m\\
\left\lfloor
\begin{array}{l}
a_{i,n}=J_{\gamma_n A_i}\Big(x_{i,n}+\gamma_n
\big(z_i-\sum_{k=1}^KL_{ki}^*v_{k,n}^*\big)\Big)\\
\end{array}
\right.\\[1mm]
\text{for}\;k=1,\ldots,K\\
\left\lfloor
\begin{array}{l}
l_{k,n}=\sum_{i=1}^mL_{ki}x_{i,n}\\
b_{k,n}=r_k+J_{\mu_n B_k}\big(l_{k,n}+\mu_nv_{k,n}^*-r_k\big)\\
t_{k,n}=b_{k,n}-\sum_{i=1}^mL_{ki}a_{i,n}\\
\end{array}
\right.\\[1mm]
\text{for}\;i=1,\ldots,m\\
\left\lfloor
\begin{array}{l}
s^*_{i,n}=\gamma_n^{-1}(x_{i,n}-a_{i,n})+
\mu_n^{-1}\sum_{k=1}^KL_{ki}^*(l_{k,n}-b_{k,n})\\
\end{array}
\right.\\[1mm]
\tau_n=\sum_{i=1}^m\|s_{i,n}^*\|^2+\sum_{k=1}^K\|t_{k,n}\|^2\\
\text{if}\;\tau_n=0\\
\left\lfloor
\begin{array}{l}
\text{for}\;i=1,\ldots,m\\
\left\lfloor
\begin{array}{l}
\overline{x}_{i}=a_{i,n}\\
\end{array}
\right.\\
\text{for}\;k=1,\ldots,K\\
\left\lfloor
\begin{array}{l}
\overline{v}^*_k=v_{k,n}^*+\mu_n^{-1}(l_{k,n}-b_{k,n})\\
\end{array}
\right.\\
\text{terminate}.
\end{array}
\right.\\
\text{if}\;\tau_n>0\\
\left\lfloor
\begin{array}{l}
\lambda_n\in\left[\varepsilon,2-\varepsilon\right]\\
\theta_n=\lambda_n\big(\gamma_n^{-1}\sum_{i=1}^m
\|x_{i,n}-a_{i,n}\|^2+\mu_n^{-1}
\sum_{k=1}^K\|l_{k,n}-b_{k,n}\|^2\big)/\tau_n\\
\text{for}\;i=1,\ldots,m\\
\left\lfloor
\begin{array}{l}
x_{i,n+1}=x_{i,n}-\theta_n s^*_{i,n}\\
\end{array}
\right.\\
\text{for}\;k=1,\ldots,K\\
\left\lfloor
\begin{array}{l}
v^*_{k,n+1}=v^*_{k,n}-\theta_n t_{k,n}.
\end{array}
\right.\\
\end{array}
\right.\\[8mm]
\end{array}
\right.\\[4mm]
\end{array}
\end{equation}
Then either \eqref{ecnw7Gjh1107a} terminates at a solution 
$(\overline{x}_1,\ldots,\overline{x}_m,\overline{v}_1^*,\ldots,
\overline{v}_K^*)\in\boldsymbol{Z}$ in a finite 
number of iterations or it generates infinite sequences 
$(x_{1,n})_{n\in\NN}$, \ldots, $(x_{m,n})_{n\in\NN}$,
$(v_{1,n}^*)_{n\in\NN}$, \ldots, $(v_{K,n}^*)_{n\in\NN}$
such that the following hold:
\begin{enumerate}
\item
\label{tcnw7Gjh1107i}
$(\forall i\in\{1,\ldots,m\})$
$\sum_{n\in\NN}\!\|s^*_{i,n}\|^2\!<\!\pinf$,
$\sum_{n\in\NN}\!\|x_{i,n+1}-x_{i,n}\|^2\!<\!\pinf$, and
$\sum_{n\in\NN}\!\|x_{i,n}-a_{i,n}\|^2\!<\!\pinf$.
\item
\label{tcnw7Gjh1107ii}
$(\forall k\in\{1,\ldots,K\})$
$\sum_{n\in\NN}\!\|t_{k,n}\|^2\!<\!\pinf$,
$\sum_{n\in\NN}\!\|v^*_{k,n+1}-v^*_{k,n}\|^2\!<\!\pinf$, and
$\sum_{n\in\NN}\!\|\sum_{i=1}^mL_{ki}x_{i,n}-b_{k,n}\|^2\!<\!\pinf$.
\item
\label{tcnw7Gjh1107iii}
For every $i\in\{1,\ldots,m\}$ $(x_{i,n})_{n\in\NN}$ converges
weakly to a point $\overline{x}_i$, for every $k\in\{1,\ldots,K\}$
$(v^*_{k,n})_{n\in\NN}$ converges weakly to a point
$\overline{v}_k^*$, 
$(\overline{x}_1,\ldots,\overline{x}_m)\in\mathscr{P}$, 
$(\overline{v}_1^*,\ldots,\overline{v}_K^*)\in\mathscr{D}$, and 
$(\overline{x}_1,\ldots,\overline{x}_m,\overline{v}_1^*,\ldots,
\overline{v}_K^*)\in\boldsymbol{Z}$.
\end{enumerate}
\end{theorem}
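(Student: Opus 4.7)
The plan is to recognize that Theorem~\ref{tcnw7Gjh1107} is nothing more than a coordinatewise expansion of Proposition~\ref{pmnmMyt^rw-19}, applied to the instance of Problem~\ref{prob:1} supplied by Proposition~\ref{pcnw7Gjh1110}. Adopting the identifications $\HH=\bigoplus_{i=1}^m\HH_i$, $\GG=\bigoplus_{k=1}^K\GG_k$, and the operators $A$, $B$, $L$ defined in \eqref{ecnw7Gjh1110a}, Proposition~\ref{pcnw7Gjh1110}\ref{pcnw7Gjh1110i} places us in the setting of Problem~\ref{prob:1}, and the hypothesis $\mathscr{P}\neq\emp$ carries over. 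Observe for later use that the adjoint satisfies $L^*\colon\GG\to\HH\colon(y_k)_k\mapsto(\sum_{k=1}^KL_{ki}^*y_k)_i$.

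The bulk of the verification is to show that writing \eqref{emnmMyt^rw-19d} in product-space coordinates yields precisely \eqref{ecnw7Gjh1107a}. Set $x_n=(x_{i,n})_{1\leq i\leq m}$ and $v_n^*=(v_{k,n}^*)_{1\leq k\leq K}$. Using the formula for $L^*$ above together with Proposition~\ref{pcnw7Gjh1110}\ref{pcnw7Gjh1110ii}, the resolvent $a_n=J_{\gamma_nA}(x_n-\gamma_nL^*v_n^*)$ decomposes componentwise into
\[
a_{i,n}=J_{\gamma_nA_i}\Big(x_{i,n}+\gamma_n\big(z_i-\textstyle\sum_{k=1}^KL_{ki}^*v_{k,n}^*\big)\Big),
\]
and $b_n=J_{\mu_nB}(l_n+\mu_nv_n^*)$, where $l_n=Lx_n$ has $k$-th component $l_{k,n}=\sum_{i=1}^mL_{ki}x_{i,n}$, decomposes into $b_{k,n}=r_k+J_{\mu_nB_k}(l_{k,n}+\mu_nv_{k,n}^*-r_k)$. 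The Hilbert direct sum identities $\|(\cdot)_i\|^2=\sum_i\|\cdot\|^2$ and $\|(\cdot)_k\|^2=\sum_k\|\cdot\|^2$ then show that $s_n^*=\gamma_n^{-1}(x_n-a_n)+\mu_n^{-1}L^*(l_n-b_n)$ and $t_n=b_n-La_n$ break up into the vectors $(s_{i,n}^*)_i$ and $(t_{k,n})_k$ of \eqref{ecnw7Gjh1107a}, and that $\tau_n$ and the scalar $\theta_n$ of the two algorithms agree.

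Once this identification is in hand, each conclusion of Theorem~\ref{tcnw7Gjh1107} is just the componentwise reading of the corresponding assertion of Proposition~\ref{pmnmMyt^rw-19}. In the termination case ($\tau_n=0$), the output $(\overline{x},\overline{v}^*)=(a_n,v_n^*+\mu_n^{-1}(l_n-b_n))\in\boldsymbol{Z}$ has components exactly those returned by \eqref{ecnw7Gjh1107a}, and by \eqref{ecnw7Gjh1107k} this is a Kuhn-Tucker point of Problem~\ref{prob:2}. In the nonterminating case, parts \ref{pmnmMyt^rw-19i}--\ref{pmnmMyt^rw-19ii'} of Proposition~\ref{pmnmMyt^rw-19} supply summability of $\|s_n^*\|^2$, $\|t_n\|^2$, $\|x_{n+1}-x_n\|^2$, $\|v_{n+1}^*-v_n^*\|^2$, $\|x_n-a_n\|^2$, and $\|Lx_n-b_n\|^2$, which decompose coordinatewise to give \ref{tcnw7Gjh1107i} and \ref{tcnw7Gjh1107ii}. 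Since weak convergence in a Hilbert direct sum is equivalent to coordinatewise weak convergence, part \ref{pmnmMyt^rw-19iii} of Proposition~\ref{pmnmMyt^rw-19} yields \ref{tcnw7Gjh1107iii}.

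The only real obstacle is the careful index bookkeeping needed to match the compact vector recursion \eqref{emnmMyt^rw-19d} with the expanded form \eqref{ecnw7Gjh1107a}; there is no new analytical content beyond Propositions~\ref{pmnmMyt^rw-19} and \ref{pcnw7Gjh1110}.
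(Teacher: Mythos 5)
Your proposal is correct and follows exactly the paper's own route: identify the product-space operators of Proposition~\ref{pcnw7Gjh1110}, check via Proposition~\ref{pcnw7Gjh1110}\ref{pcnw7Gjh1110ii} and the formula for $L^*$ that \eqref{ecnw7Gjh1107a} is the coordinatewise transcription of \eqref{emnmMyt^rw-19d}, and then read off the conclusions from Proposition~\ref{pmnmMyt^rw-19}. No gaps; the bookkeeping you describe is all that is required.
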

\begin{proof}
Define $\HH$, $\GG$, $A$, $B$, and $L$ as in \eqref{ecnw7Gjh1110a}.
Then, as seen in Proposition~\ref{pcnw7Gjh1110}\ref{pcnw7Gjh1110i}, 
Problem~\ref{prob:1} coincides with Problem~\ref{prob:2}. Now set 
\begin{equation}
(\forall n\in\NN)\quad
\begin{cases}
a_n=(a_{i,n})_{1\leq i\leq m}\\
s^*_n=(s^*_{i,n})_{1\leq i\leq m}\\
x_n=(x_{i,n})_{1\leq i\leq m}\\
b_n=(b_{k,n})_{1\leq k\leq K}\\
l_n=(l_{k,n})_{1\leq k\leq K}\\
t_n=(t_{k,n})_{1\leq k\leq K}\\
v^*_n=(v^*_{k,n})_{1\leq k\leq K}.
\end{cases}
\end{equation}
Then we derive from 
Proposition~\ref{pcnw7Gjh1110}\ref{pcnw7Gjh1110ii} that
\eqref{ecnw7Gjh1107a} coincides with \eqref{emnmMyt^rw-19d}. 
The assertions therefore follow from 
Proposition~\ref{pmnmMyt^rw-19}.
\end{proof}

\begin{remark}
\label{rcnw7Gjh1113}
\rm
In the special case when $m=1$, $A_1=0$, $z_1=0$, and, for every 
$k\in\{1,\ldots,K\}$, $\GG_k=\HH$, $L_{k1}=\Id$, and $r_k=0$, the 
primal problem \eqref{N09Kd424p} becomes
\begin{equation}
\label{eXc5t-iU8-19b}
\text{find}\;\;\overline{x}\in\HH\;\;\text{such that}\;\; 
0\in\sum_{k=1}^KB_k\overline{x},
\end{equation}
and the associated Kuhn-Tucker set of \eqref{ecnw7Gjh1107k} becomes
\begin{equation}
\label{eXc5t-iU8-19c}
\boldsymbol{Z}=\bigg\{(x,v_1^*,\ldots,v^*_K)\in\HH^{K+1}
\;\bigg |\;
\sum_{k=1}^Kv_k^*=0\:\;\text{and}\;\;
(\forall k\in\{1,\ldots,K\})\;\;v_k^*\in B_kx\bigg\}.
\end{equation}
In this setting, \eqref{ecnw7Gjh1107a} reduces to an algorithm
which is similar to that of \cite[Section~4]{Svai09}. The
convergence of the latter was established under the additional 
assumption that $\sum_{k=1}^KB_k$ is maximally monotone or 
that $\HH$ is finite-dimensional \cite[Proposition~4.2]{Svai09}, 
but these assumptions were subsequently shown not to be necessary 
\cite{Baus09}.
Let us note that in this special case, \eqref{ecnw7Gjh1107a} 
is different from the algorithm of \cite{Svai09} as it has 
a parallel structure (all the operators $(B_k)_{1\leq k\leq K}$ 
are used simultaneously) whereas that of \cite{Svai09} allows for
more flexibility (e.g., sequential activation) and it assigns to 
each monotone operator its own scaling parameter. It is natural to 
ask whether, in our general setting, \eqref{ecnw7Gjh1107a} could 
be extended to include such features by using 
Corollary~\ref{ccnw7Gjh1106} directly instead of 
Proposition~\ref{pmnmMyt^rw-19}. We have not been successful 
in bringing an affirmative answer to this question.
\end{remark}

\begin{remark}
\label{rx12vv2Frs15}
\rm
Using Proposition~\ref{pcnw7Gjh1110}, any algorithm for solving
Problem~\ref{prob:1} can in principle be used to solve 
Problem~\ref{prob:2}. However, methods which require
the computation of the norm of the operator $L$ of 
\eqref{ecnw7Gjh1110a} face the difficulty of expressing it
tightly in terms of those of the individual coupling operators
$(L_{ki})_{\substack{1\leq k\leq K\\ 1\leq i\leq m}}$;
see for instance \cite{Beck14,Siop13} for examples of such 
approximations.
This task is further complicated by the fact that in some
situations the norms of the individual coupling operators  
may not even be computable precisely. For
instance, in domain decomposition methods, $L_{ki}$
is the trace operator relative to the interface between two
subdomains and, depending on the underlying assumptions, its norm 
may not be easy to estimate. Likewise, inverting linear
operators based on various combinations of the individual coupling
operators is typically unfeasible in such applications, 
which renders inoperative those methods of 
\cite{Optl14,Bot13c,Siop11} using
such computations. These shortcomings of existing
methods are circumvented by \eqref{ecnw7Gjh1107a}, which makes it 
particularly attractive to solve Problem~\ref{prob:2}.
\end{remark}

\begin{remark}
\label{rcnw7Gjh1121}
\rm
An alternative method to solve Problem~\ref{prob:2} is that
proposed in \cite{Siop13}. In terms of complexity 
per iteration and parallelizability, both algorithms are quite 
comparable. However, as noted in Remark~\ref{rx12vv2Frs15},
the method of \cite{Siop13} implicitly requires a tight bound on 
the norm of the global operator $L$ of \eqref{ecnw7Gjh1110a}, 
which can be a serious drawback. Another difference
between the method of \cite{Siop13} and \eqref{ecnw7Gjh1107a}, is
that the latter features two sequences of scaling parameters 
$(\gamma_n)_{n\in\NN}$ and $(\mu_n)_{n\in\NN}$ which, furthermore, 
can be arbitrarily large or small. The proposed algorithm
\eqref{ecnw7Gjh1107a} also incorporates relaxation parameters 
$(\lambda_n)_{n\in\NN}$ that can induce large step sizes through
overrelaxations up to almost 2, whereas the method of 
\cite{Siop13} is unrelaxed. 
\end{remark}

An important area of application of Problem~\ref{prob:2} is 
multivariate convex minimization problems. We denote by 
$\Gamma_0(\HH)$ the class of lower semicontinuous convex proper
functions from $\HH$ to $\RX$. Let
$f\in\Gamma_0(\HH)$. The conjugate of $f$ is 
$\Gamma_0(\HH)\ni f^*\colon u\mapsto
\sup_{x\in\HH}(\scal{x}{u}-f(x))$.
For every $x\in\HH$, $f+\|x-\cdot\|^2/2$ has a unique minimizer, 
which is denoted by $\prox_fx$ \cite{Mor62b}. We have 
\begin{equation}
\label{e:prox2}
\prox_f=J_{\partial f},
\end{equation}
where
\begin{equation}
\partial f\colon\HH\to 2^{\HH}\colon x\mapsto
\menge{u^*\in\HH}{(\forall y\in\HH)\;\:\scal{y-x}{u^*}+f(x)
\leq f(y)} 
\end{equation}
is the subdifferential of $f$. The following formulation captures a
variety of multivariate minimization problems, e.g., 
\cite{Atto11,AujK06,Bach12,Nmtm09,Jmiv11,Cand11,Cohe87,%
Fran12,Juan12}.

\begin{problem}
\label{prob:3}
\rm
Let $m$ and $K$ be strictly positive integers, let
$(\HH_i)_{1\leq i\leq m}$ and $(\GG_k)_{1\leq k\leq K}$ be real 
Hilbert spaces, and set 
$\KKK=\HH_1\oplus\cdots\HH_m\oplus\GG_1\oplus\cdots\oplus\GG_K$.
For every $i\in\{1,\ldots,m\}$ and 
$k\in\{1,\ldots,K\}$, let $f_i\in\Gamma_0(\HH_i)$, let 
$g_k\in\Gamma_0(\GG_k)$, let $z_i\in\HH_i$, let $r_k\in\GG_k$, and 
let $L_{ki}\colon\HH_i\to\GG_k$ be linear and bounded. 
Let $\mathscr{P}$ be the set of solutions to the primal problem
\begin{equation}
\label{jji*yTTts23p}
\minimize{x_1\in\HH_1,\ldots,\,x_m\in\HH_m}{\sum_{i=1}^m
\big(f_i(x_i)-\scal{x_i}{z_i}\big)+\sum_{k=1}^K 
g_k\bigg(\sum_{i=1}^mL_{ki}x_i-r_k\bigg)},
\end{equation}
and let $\mathscr{D}$ be the set of solutions to the dual problem
\begin{equation}
\label{jji*yTTts23d}
\minimize{v^*_1\in\GG_1,\ldots,\,v^*_K\in\GG_K}{\sum_{i=1}^m
f_i^*\bigg(z_i-\sum_{k=1}^KL_{ki}^*v^*_k\bigg)
+\sum_{k=1}^K\big(g^*_k(v^*_k)+\scal{v^*_k}{r_k}\big)}.
\end{equation}
The problem is to find a point in the associated Kuhn-Tucker set
\begin{multline}
\label{ecnw7Gjh1113k}
\boldsymbol{Z}=\bigg\{(x_1,\ldots,x_m,v_1^*,\ldots,v^*_K)\in\KKK
\;\bigg |\;
(\forall i\in\{1,\ldots,m\})\;\;z_i-\sum_{k=1}^KL_{ki}^*v_k^*\in
\partial f_ix_i\\
\text{and}\:\;
(\forall k\in\{1,\ldots,K\})\;\;\sum_{i=1}^mL_{ki}x_i-r_k\in
\partial g_k^*v_k^*\bigg\}.
\end{multline}
\end{problem}

\begin{corollary}
\label{ccnw7Gjh1113}
Consider the setting of Problem~\ref{prob:3}. Suppose that 
\begin{equation}
\label{jji*yTTts21a}
(\forall i\in\{1,\ldots,m\})\quad
z_i\in\ran\bigg(\partial f_i+\sum_{k=1}^KL_{ki}^*\circ
\partial g_k\circ\bigg(\sum_{j=1}^mL_{kj}\cdot-r_k\bigg)\bigg),
\end{equation}
let $\varepsilon\in\zeroun$, let $x_{1,0}\in\HH_1$, \ldots, 
$x_{m,0}\in\HH_m$, let $v^*_{1,0}\in\GG_1$, \ldots, 
$v^*_{K,0}\in\GG_K$, and iterate \eqref{ecnw7Gjh1107a},
where the only modification is that we now set
\begin{equation}
\label{ecnw7Gjh1113b}
a_{i,n}=\prox_{\gamma_n f_i}\Bigg(x_{i,n}+\gamma_n
\Bigg(z_i-\sum_{k=1}^KL_{ki}^*v_{k,n}^*\Bigg)\Bigg)
\end{equation}
and
\begin{equation}
b_{k,n}=r_k+\prox_{\mu_n g_k}\big(l_{k,n}+\mu_nv_{k,n}^*-r_k\big).
\end{equation}
Then the conclusions of Theorem~\ref{tcnw7Gjh1107} hold true.
\end{corollary}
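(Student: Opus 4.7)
The plan is to reduce Corollary~\ref{ccnw7Gjh1113} to Theorem~\ref{tcnw7Gjh1107} by instantiating the latter with the subdifferential operators induced by the convex functions in Problem~\ref{prob:3}. First I would set, for every $i\in\{1,\ldots,m\}$ and $k\in\{1,\ldots,K\}$, $A_i=\partial f_i$ and $B_k=\partial g_k$. Since $f_i\in\Gamma_0(\HH_i)$ and $g_k\in\Gamma_0(\GG_k)$, these operators are maximally monotone (Moreau's theorem, see \cite{Livre1}). Thus the data fit into Problem~\ref{prob:2}, and by \eqref{e:prox2} the resolvents appearing in \eqref{ecnw7Gjh1107a} take the form $J_{\gamma_n A_i}=\prox_{\gamma_n f_i}$ and $J_{\mu_n B_k}=\prox_{\mu_n g_k}$, which yields exactly the modified updates \eqref{ecnw7Gjh1113b}.

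Next, I would verify that the solution set $\mathscr{P}$ of the primal inclusion \eqref{N09Kd424p} associated with this instance of Problem~\ref{prob:2} is nonempty. This is precisely the content of hypothesis \eqref{jji*yTTts21a}: the inclusion $z_i\in\ran(\partial f_i+\sum_kL_{ki}^*\circ\partial g_k\circ(\sum_jL_{kj}\cdot-r_k))$ for every $i$ is equivalent to the existence of $(\overline{x}_1,\ldots,\overline{x}_m)$ solving \eqref{N09Kd424p}. Hence Theorem~\ref{tcnw7Gjh1107} applies verbatim and delivers items \ref{tcnw7Gjh1107i}--\ref{tcnw7Gjh1107iii} with respect to the Kuhn-Tucker set \eqref{ecnw7Gjh1107k} associated with $A_i=\partial f_i$ and $B_k=\partial g_k$, which coincides with the set \eqref{ecnw7Gjh1113k} of the present corollary.

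The last point to address is to relate the notions of primal/dual solutions used in Theorem~\ref{tcnw7Gjh1107} (sets $\mathscr{P}$ and $\mathscr{D}$ of Problem~\ref{prob:2}) with those in Corollary~\ref{ccnw7Gjh1113} (sets $\mathscr{P}$ and $\mathscr{D}$ of Problem~\ref{prob:3}). Given a Kuhn-Tucker point $(\overline{x}_1,\ldots,\overline{x}_m,\overline{v}_1^*,\ldots,\overline{v}_K^*)$ in \eqref{ecnw7Gjh1113k}, the relation $\sum_iL_{ki}\overline{x}_i-r_k\in\partial g_k^*\overline{v}_k^*$ is equivalent, by Fenchel--Young, to $\overline{v}_k^*\in\partial g_k(\sum_iL_{ki}\overline{x}_i-r_k)$, and combined with $z_i-\sum_kL_{ki}^*\overline{v}_k^*\in\partial f_i\overline{x}_i$ one obtains, by summing the corresponding subgradient inequalities, that $(\overline{x}_1,\ldots,\overline{x}_m)$ minimizes \eqref{jji*yTTts23p} and $(\overline{v}_1^*,\ldots,\overline{v}_K^*)$ minimizes \eqref{jji*yTTts23d}. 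This classical Fenchel--Rockafellar argument completes the reduction.

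The only mildly delicate point is the last one, namely confirming that a Kuhn-Tucker point of the monotone inclusion automatically yields a minimizer of each convex problem; however, this is a standard consequence of the Young--Fenchel inequality and introduces no new difficulty beyond what is already encoded in Proposition~\ref{p:uuYt6y31z}\ref{p:uuYt6y31zi}. All other steps are essentially a dictionary translation between the monotone-operator language of Theorem~\ref{tcnw7Gjh1107} and the variational language of Problem~\ref{prob:3}.
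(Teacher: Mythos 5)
Your proposal is correct and takes essentially the same route as the paper: the paper's proof likewise sets $A_i=\partial f_i$ and $B_k=\partial g_k$ and then invokes the arguments of \cite[Proposition~5.4]{Siop13} to identify \eqref{jji*yTTts23p} and \eqref{jji*yTTts23d} as instances of \eqref{N09Kd424p} and \eqref{N09Kd424d}, which is exactly the Fenchel--Young/subgradient-inequality reduction you carry out explicitly. The only cosmetic quibble is that the passage from Kuhn--Tucker points to minimizers is not ``encoded in Proposition~\ref{p:uuYt6y31z}\ref{p:uuYt6y31zi}'' (which concerns the inclusion problems only) but rather in the convex-duality argument you give, which is the right one.
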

\begin{proof}
Set $(\forall i\in\{1,\ldots,m\})$ $A_i=\partial f_i$ and 
$(\forall k\in\{1,\ldots,K\})$ $B_k=\partial g_k$. Then, using 
the same arguments as in \cite[Proposition~5.4]{Siop13}, we 
obtain that \eqref{jji*yTTts23p} and \eqref{jji*yTTts23d}
are instances of \eqref{N09Kd424p} and \eqref{N09Kd424d}, 
respectively. 
\end{proof}

Sufficient conditions for this constraint qualification
\eqref{jji*yTTts21a} to hold 
can be found in \cite[Proposition~5.3]{Siop13}.
In particular, if $(\HH_i)_{1\leq i\leq m}$ and 
$(\GG_k)_{1\leq k\leq K}$ are finite-dimensional and if 
$\mathscr{P}\neq\emp$, then \eqref{jji*yTTts21a} is satisfied
if $(\forall i\in\{1,\dots,m\})
(\exi x_i\in\reli\dom f_i)(\forall k\in\{1,\ldots,K\})$
$\sum_{i=1}^mL_{ki}x_i-r_k\in\reli\dom g_k$.

\end{document}